\newcommand{\banacha}{\mathbb X}
\newcommand{\banachb}{\mathbb Y}
\newtheorem{theorem}{Theorem}
\newtheorem{lemma}[theorem]{Lemma}
\newtheorem{corollary}[theorem]{Corollary}
\newtheorem{proposition}[theorem]{Proposition}
\newtheorem{remark}{Remark}
\begin{document}
%%%%%%%%%%%%%%%%%%%%%%%%%%%%%%%%%%%%%%%%%%%%%%%%%%%%%%%%%%%%%
\title{Local convergence analysis of inexact Gauss-Newton like \\   methods under  majorant condition}

\author{ O. P. Ferreira\thanks{IME/UFG, Campus II- Caixa
    Postal 131, 74001-970 - Goi\^ania, GO, Brazil (E-mail:{\tt
      orizon@mat.ufg.br}). The author was partly supported by
    CNPq Grant 473756/2009-9, CNPq Grant 302024/2008-5, PRONEX-Optimization(FAPERJ/CNPq) and FUNAPE/UFG.}
   \and  M. L. N. Gon\c calves \thanks{COPPE-Sistemas, Universidade Federal do Rio de Janeiro, 21945-970 Rio de Janeiro, RJ, BR (E-mail:{\tt
      maxlng@cos.ufrj.br}). The author was partly supported by
    CNPq Grant 473756/2009-9.} \and P. R. Oliveira \thanks{COPPE-Sistemas, Universidade Federal do Rio de Janeiro,
21945-970 Rio de Janeiro, RJ, BR (Email: {\tt poliveir@cos.ufrj.br}).
This author was partly supported by CNPq.} }
 \maketitle
%%%%%%%%%%%%%%%%%%%%%%%%%%,,,,,,,,,,,,,,,,,,,,,,,,,,,,,,,
\begin{abstract}
%%%%%%%%%%%%%%%%%%%%%%%%%
In this paper, we present a local convergence analysis of inexact Gauss-Newton like methods for solving nonlinear least squares problems. Under the hypothesis that the derivative of the function associated with the least square problem satisfies a majorant condition, we obtain that the method is well-defined and converges.  Our analysis provides a clear relationship between the majorant function and the function associated with the least square problem. It also allows us to obtain an estimate of convergence ball for inexact Gauss-Newton like methods and some important, special cases.
\end{abstract}

\noindent {{\bf Keywords:} Nonlinear least squares problems;
inexact Gauss-Newton like  methods; Majorant condition; Local convergence.}

\maketitle
%%%%%%%%%%%%%%%%%%%%%%%%%%%%%%%%%%%%%%%%%%%%%%%%%%%%%%%%
\section{Introduction}\label{sec:int}
Let $\banacha$ and $\banachb$ be real or complex Hilbert spaces. Let $\Omega\subseteq\banacha$ be an open set, and  $F:\Omega\to \banachb$ a continuously differentiable nonlinear function. Consider the following {\it nonlinear least squares} problems
\begin{equation}\label{eq:p1}
\min_{x\in \Omega } \;\|F(x)\|^2.
\end{equation}
The interest in this problem arises in data fitting, when $\banacha=\mathbb{R}^n$ and $\banachb=\mathbb{R}^m$ and $m$ is the number of observations and $n$ is the number of parameters, see for example \cite{DN1}. A solution $x_* \in \Omega$ of \eqref{eq:p1} is also called a least-squares solution of nonlinear equation $F(x)=0.$

When $F'(x)$ is injective and has closed image for all $x\in \Omega$, the Gauss-Newton's method finds stationary points of
the above problem.  Formally, the Gauss-Newton's method is described as follows: Given
an initial point $x_0 \in \Omega$, define
$$
x_{k+1}={x_k}+S_{k}, \qquad F'(x_k)^*F'(x_k)S_{k}=-F'(x_k)^*F(x_k),
\qquad k=0,1,\ldots,
$$
where $A^*$ denotes the adjoint of the operator $A$.  It is worth pointing out that if $x_*$ is solution of \eqref{eq:p1}, $F(x_*)=0$ and $F'(x_*)$ is invertible, then the theories of the Gauss-Newton's method merge into the theories of Newton's method. Early works dealing with the convergence of Newton's and Gauss-Newton's methods include \cite{MR1918655,1390141,Argyros2010,C11,chen2,MR1895083,Jean-PierreDedieu07012003,MR1651750,F08,MAX2,FS06,KAN1,Li2010,LZJ,MR2475307,Proinov20103,XW10}.

The inexact Gauss-Newton  process is described as follows: Given
an initial point $x_0 \in \Omega$, define
$$
x_{k+1}=x_{k}+S_{k},\qquad {k=0,1,...,}
$$
where $ B_{k} : \banacha \to \banachb$ is a linear operator and $S_{k}$ is any approximated solution of the linear system
$$
 {B_{k}S_{k}=-F'(x_k)^*F(x_{k})+r_{k},}
$$
for a suitable residual $r_k \in \banachb$.  In particular, the above process is  {\it inexact  Gauss-Newton method} if
$B_k=F'(x_k)^TF'(x_k),$ the process is {\it inexact modified Gauss-Newton method} if
$B_k=F'(x_0)^TF'(x_0),$ and it represents a {\it inexact  Gauss-Newton like method} if $B_k$ is an approximation of
$F'(x_k)^TF'(x_k).$

For inexact Newton methods, as shown in \cite{DE1}, if $\|r_{k}\|\leq \theta_{k}\|F(x_{k})\|$ for $k=0,1,\ldots$ and $\{\theta_{k}\}$ is a sequence of forcing terms such that $0\leq \theta_{k}<1$ then there exists $\epsilon>0$ such that the sequence $\{ x_k\}$, for any initial point $x_0 \in B(x_*, \epsilon)=\{x\in \mathbb{R}^{n}:\; \|x_{*}-x\|<\epsilon\}$, is well defined and converges linearly to  $x_{*}$  in the norm $\|y\|_*=\|F'(x_*)y\|$, where $\| \; \|$ is any norm in $ \mathbb{R}^{n}$. As pointed out by \cite{JM10} (see also \cite{B10}) the result of \cite{DE1} is difficult to apply due to a dependence of the norm  $\|\; \|_*$, which is not computable.

Formally, the inexact Gauss-Newton like methods for solving  \eqref{eq:p1}, which we will consider, are described as follows: Given an initial point $x_0 \in {\Omega}$, define
$$
x_{k+1}={x_k}+S_k,\qquad B(x_k)S_k=-F'(x_k)^*F(x_{k})+r_{k}, \qquad k=0,1,\ldots,
$$
where $B(x_k)$ is a suitable invertible approximation of the derivative $F'(x_k)^*F'(x_{k})$ and the residual tolerance  $r_k$ and the preconditioning invertible matrix $P_{k}$ (considered for the first time in \cite{B10}) for the linear system defining the step $S_k$ satisfy
$$
\|P_{k}r_{k}\|\leq \theta_{k}\|P_{k}F'(x_k)^*F(x_{k})\|,
$$
for suitable forcing number $\theta_{k}$.  Note that, if the forcing sequence vanishes, i.e., $\theta_{k}=0$ for all $k,$ the inexact Gauss-Newton methods include the class of Gauss-Newton iterative methods. Hence, the theories of inexact Gauss-Newton methods merge into the theories of Gauss-Newton methods. 

The classical local convergence analysis for the inexact Newton's methods (see \cite{DE1,B10}) requires, among other hypotheses, that $F'$ satisfies the Lipschitz condition.  In the last years, there have been papers dealing with the issue of convergence of the Newton method and inexact Newton's method, including the Gauss-Newton's method and inexact Gauss-Newton's method, by relaxing the assumption of Lipschitz continuity of the derivative (see for example: \cite{AAA,C11,C10,MR1895083,F08,MAX1,MAX2,FS06,LZJ,XW10}). One of the main conditions that relaxes the condition of the Lipschitz continuity of the derivative is the majorant condition, which we will use, and Wang's condition, introduced
in \cite{XW10} and used in \cite{AAA,C11,C10,chen2,Li2010,LZJ} to study the Gauss-Newton's and Newton's methods. In fact, it can be shown that these conditions are equivalent. But the formulation as a majorant condition is in some sense better than Wang's condition, as it provides a clear relationship between the majorant
function and the nonlinear function under consideration. Besides, the majorant condition provides a simpler proof of convergence. 

In the present paper, we are interested in the local convergence analysis, i.e., based on the information in a neighbourhood of a stationary point of \eqref{eq:p1} we determine the convergence ball of the method. Following the ideas of \cite{F08,MAX1,MAX2,FS06}, we will present a new local convergence
analysis for inexact Gauss-Newton like methods under majorant condition.  The convergence analysis
presented provides a clear relationship between the majorant
function, which relaxes the Lipschitz continuity of the derivate, and the function associated
with the nonlinear least square problem (see for example: Lemmas \ref{wdns}, \ref{pr:taylor} and \ref{passonewton}). Besides, the results presented here have the conditions and the proof of convergence in quite a simple manner.   Moreover, two unrelated previous results pertaining to inexact Gauss-Newton like methods are unified, namely, the result for analytical functions and the classical one for functions with Lipschitz derivative.

The organization of the paper is as follows.
In Section \ref{sec:int.1}, we list some notations and basic results used in our presentation.
In Section \ref{lkant} the main result is stated, and in Section \ref{sec:PMF} some properties involving the majorant function are established. In Section \ref{sec:MFNLO} we present the relationships between the majorant function and the non-linear function $F$. In Section \ref{sec:proof} the main result is proven and some applications of this result are given in Section \ref{sec:ec}. Some final remarks are offered in Section~\ref{rf}.

%%%%%%%%%%%%%%%%%%%%%%%%%%%%%%%%%%%%%%%%%%%%%%%%%%
%%%%%%%%%%%%%%%%%%%%%%%%%%%%%%%%%%%%%%%%%%%%%%%%%%
\subsection{Notation and auxiliary results} \label{sec:int.1}
%%%%%%%%%%%%%%%%%%%%%%%%%%%%%%%%%%%%%%%%%
The following notations and results are used throughout our
presentation.   Let $\banacha$ and $\banachb$ be Hilbert spaces. The open and closed ball
at $a \in \banacha$ and radius $\delta>0$ are denoted, respectively by
$$
B(a,\delta) :=\{ x\in \banacha ;\; \|x-a\|<\delta \}, \qquad B[a,\delta] :=\{ x\in \banacha ;\; \|x-a\|\leqslant \delta \}.
$$
The set $\Omega\subseteq\banacha$ is an open set and the function $F:\Omega\to \banachb$ is continuously differentiable, and $F'(x)$  has closed image in $\Omega$.

Let $A: \banacha \to \banachb$ be a continuous and injective linear operator with closed image. The Moore-Penrose inverse $A^\dagger:\banachb \to \banacha$ of $A$ is defined by
$$
A^\dagger:=(A^*A)^{-1} A^*,
$$
where  $A^*$ denotes the adjoint of the linear operator $A$.
%%%%%%%%%%%%%%
\begin{lemma}(Banach's Lemma) \label{lem:ban1}
Let $B:\banacha \to \banacha$ be a continuous linear operator, and $\mbox{I}: \banacha \to \banacha$ the identity operator.
If  $\|B-I\|<1$,  then $B$ is invertible and $ \|B^{-1}\|\leq
1/\left(1- \|B-I\|\right). $
\end{lemma}
\begin{proof}  See the proof of Lemma 1, p. 189 of Smale \cite{S86} with $A=I$ and $c=\|B-I\|$.
\end{proof}
%%%%%%%%%%%%%%%%%%%%%%%%%%
\begin{lemma} \label{lem:ban2}
Let $A, B: \banacha \to \banachb$ be a continuous linear operator with closed image. If $A$ is injective, $E=B-A$ and
$\|EA^\dagger \|<1$, then $B$ is injective.
\end{lemma}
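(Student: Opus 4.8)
The plan is to reduce the injectivity of $B$ to the injectivity of $A$ by exhibiting $B$ as a composition of $A$ with an invertible operator, and then to verify that invertibility via Banach's Lemma (Lemma~\ref{lem:ban1}). The starting observation is the basic identity satisfied by the Moore--Penrose inverse of an injective operator with closed image: since $A^\dagger=(A^*A)^{-1}A^*$, one has
$$
A^\dagger A=(A^*A)^{-1}A^*A=\mbox{I},
$$
the identity on $\banacha$. This is the only structural fact about $A^\dagger$ that the argument needs.

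With this in hand, I would factor $B$ as follows. Writing $B=A+E$ and inserting the identity $A^\dagger A=\mbox{I}$ into the term $E$, I get
$$
B=A+E=A+E(A^\dagger A)=A+(EA^\dagger)A=\left(\mbox{I}+EA^\dagger\right)A,
$$
where $EA^\dagger:\banachb\to\banachb$ and hence $\mbox{I}+EA^\dagger:\banachb\to\banachb$. The point of arranging the factorization in this order, rather than pulling $A^\dagger$ to the left, is precisely that the hypothesis controls $\|EA^\dagger\|$ and not $\|A^\dagger E\|$; this is the step to get right, and it is the main place where one must be careful about which composition appears.

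Next I would apply Banach's Lemma to the operator $\mbox{I}+EA^\dagger$ on $\banachb$. Since
$$
\|(\mbox{I}+EA^\dagger)-\mbox{I}\|=\|EA^\dagger\|<1,
$$
Lemma~\ref{lem:ban1} guarantees that $\mbox{I}+EA^\dagger$ is invertible, in particular injective. Finally, I would conclude injectivity of $B$ directly from the factorization: if $Bx=0$, then $(\mbox{I}+EA^\dagger)Ax=0$, so $Ax=0$ by injectivity of $\mbox{I}+EA^\dagger$, and hence $x=0$ by injectivity of $A$. Thus $B$ is injective, completing the proof. No part of this is computationally heavy; the only genuine subtlety is the correct placement of $A^\dagger$ in the factorization so that the available bound $\|EA^\dagger\|<1$ is the one that feeds into Banach's Lemma.
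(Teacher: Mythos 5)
Your proof is correct and follows essentially the same route as the paper's: the factorization $B=(\mbox{I}+EA^\dagger)A$ combined with Banach's Lemma applied to $\mbox{I}+EA^\dagger$. You are slightly more explicit than the paper in justifying the factorization via $A^\dagger A=\mbox{I}$ and in spelling out why invertibility of the factor yields injectivity of $B$, but the underlying argument is identical.
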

\begin{proof} In fact, $B=A+E=(I+EA^\dagger)A,$ from the condition $\|EA^\dagger\|<1,$ we have of Lemma \ref{lem:ban1} that $I+EA^\dagger$ is invertible. So, $B$ is injective.
\end{proof}
The next lemma is proven in Stewart \cite{G1} ( see also, Wedin
\cite{W1} ) for  $m\times n$ matrix with $m\geq n$ and $rank(A)=rank(B)=n$, that proof holds in more general context as we will state below.
\begin{lemma} \label{lem:ban}
Let $A, B: \banacha \to \banachb$ be continuous and injective linear operators with closed images.  Assume that $E=B-A$ and $\|A^\dagger\|\|E\|<1$, then
$$
\|B^\dagger\|\leq \frac{\|A^\dagger\|}{ 1- \|A^\dagger\|\|E\|}, \qquad \|B^\dagger-A^\dagger\|\leq \frac{\sqrt{2}\|A^\dagger\|^2\|E\|}{
1- \|A^\dagger\|\|E\|}.
$$
\end{lemma}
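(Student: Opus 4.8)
The plan is to establish the two inequalities in turn, deriving the bound on $\|B^\dagger\|$ first and then feeding it into the estimate for $\|B^\dagger-A^\dagger\|$. Throughout I would use that injectivity together with closed image gives $A^\dagger A=I$ and $B^\dagger B=I$, that $P_A:=AA^\dagger$ and $P_B:=BB^\dagger$ are orthogonal projections (so $\|I-P_A\|\le 1$ and $\|I-P_B\|\le 1$), and that $\|A^\dagger\|=1/\inf_{\|x\|=1}\|Ax\|$. For the first inequality I argue through the smallest singular value: for every $x$ with $\|x\|=1$ one has $\|Bx\|\ge \|Ax\|-\|Ex\|\ge (1/\|A^\dagger\|-\|E\|)\|x\|$, and the hypothesis $\|A^\dagger\|\|E\|<1$ makes the coefficient positive, so $B$ is bounded below (hence injective, consistent with Lemma \ref{lem:ban2}) and $\inf_{\|x\|=1}\|Bx\|\ge (1-\|A^\dagger\|\|E\|)/\|A^\dagger\|$, giving $\|B^\dagger\|\le \|A^\dagger\|/(1-\|A^\dagger\|\|E\|)$.

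For the second inequality the starting point is the identity coming from $B^\dagger B=I$, namely $B^\dagger-A^\dagger=B^\dagger(I-BA^\dagger)=-B^\dagger E A^\dagger+B^\dagger(I-P_A)$. Writing $M_1=-B^\dagger E A^\dagger$ and $M_2=B^\dagger(I-P_A)$, the crucial observation is that these pieces have orthogonal co-ranges: since $A^\dagger(I-P_A)=0$ we get $M_1=M_1P_A$ and $M_2=M_2(I-P_A)$, whence $M_1M_2^*=0$. Therefore $\|B^\dagger-A^\dagger\|^2=\|M_1M_1^*+M_2M_2^*\|\le \|M_1\|^2+\|M_2\|^2$, which replaces the lossy triangle inequality by a Pythagorean bound and is the source of the constant $\sqrt2$. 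Immediately $\|M_1\|\le \|A^\dagger\|\|B^\dagger\|\|E\|$, and rewriting $M_2=(B^*B)^{-1}E^*(I-P_A)$ via $A^*(I-P_A)=0$ gives $\|M_2\|\le \|B^\dagger\|^2\|E\|$.

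The hard part is precisely that this estimate for $M_2$ carries $\|B^\dagger\|^2$ rather than the required $\|A^\dagger\|\|B^\dagger\|$, and bounding $M_2$ in isolation cannot do better. I would overcome this by exploiting the symmetry of the statement: interchanging the roles of $A$ and $B$ (replacing $E$ by $-E$) produces the dual identity $B^\dagger-A^\dagger=-A^\dagger E B^\dagger-A^\dagger(I-P_B)$ with the analogous orthogonality $B^\dagger(I-P_B)=0$, and the same Pythagorean argument yields $\|B^\dagger-A^\dagger\|^2\le \|A^\dagger\|^2\|B^\dagger\|^2\|E\|^2+\|A^\dagger\|^4\|E\|^2$. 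Thus I have two decompositions whose range-gap terms are controlled by $\|B^\dagger\|^2\|E\|$ and by $\|A^\dagger\|^2\|E\|$ respectively; selecting the decomposition associated with $\min(\|A^\dagger\|,\|B^\dagger\|)$ and using $\min(\|A^\dagger\|,\|B^\dagger\|)^2\le \|A^\dagger\|\|B^\dagger\|$ bounds that term by $\|A^\dagger\|\|B^\dagger\|\|E\|$ in either case.

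Inserting this into the Pythagorean estimate gives $\|B^\dagger-A^\dagger\|\le \sqrt2\,\|A^\dagger\|\|B^\dagger\|\|E\|$, and substituting the first inequality $\|B^\dagger\|\le \|A^\dagger\|/(1-\|A^\dagger\|\|E\|)$ produces the claimed bound. The only point requiring care in the Hilbert-space setting, as opposed to the matrix case of Stewart \cite{G1} and Wedin \cite{W1}, is to confirm that every manipulation uses solely $A^\dagger A=I$, $B^\dagger B=I$, the projection identities $A^\dagger(I-P_A)=0$ and $B^\dagger(I-P_B)=0$, and submultiplicativity of the operator norm, all of which remain valid for continuous injective operators with closed image; I expect verifying this transfer, rather than any new computation, to be the only genuinely delicate step.
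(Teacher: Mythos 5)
Your proof is correct. For the first inequality, the lower-bound argument $\|Bx\|\ge\|Ax\|-\|Ex\|\ge(1-\|A^\dagger\|\|E\|)/\|A^\dagger\|$ for unit $x$, together with the identity $\|A^\dagger\|=1/\inf_{\|x\|=1}\|Ax\|$ (valid for injective operators with closed range, since $A^\dagger$ annihilates $R(A)^\perp$ and inverts $A$ on $R(A)$), is sound. For the second, the decomposition $B^\dagger-A^\dagger=-B^\dagger EA^\dagger+B^\dagger(I-P_A)$, the orthogonality $M_1M_2^*=0$ yielding $\|M_1+M_2\|^2\le\|M_1\|^2+\|M_2\|^2$ via the $C^*$-identity, and the bounds $\|M_1\|\le\|A^\dagger\|\|B^\dagger\|\|E\|$, $\|M_2\|\le\|(B^*B)^{-1}\|\|E\|=\|B^\dagger\|^2\|E\|$ all check out, as does the symmetric decomposition obtained by exchanging $A$ and $B$ and the selection of whichever of the two gives the smaller range-gap term; this is exactly how one reaches $\|B^\dagger-A^\dagger\|\le\sqrt2\,\|A^\dagger\|\|B^\dagger\|\|E\|$ without losing a factor.

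The comparison with the paper is somewhat degenerate: the paper gives no proof at all, only the citation to Stewart and Wedin together with the assertion that the matrix proof "holds in more general context." What you have written is, in substance, the classical Stewart--Wedin argument itself, carried out in the Hilbert-space setting, and your final paragraph does precisely the work the paper waves at --- checking that every step uses only $A^\dagger A=I$, $B^\dagger B=I$, the orthogonality of $P_A$ and $P_B$, the invertibility of $A^*A$ and $B^*B$ (guaranteed by injectivity plus closed range), and the identity $\|(B^*B)^{-1}\|=\|B^\dagger(B^\dagger)^*\|=\|B^\dagger\|^2$. So your proposal supplies a self-contained verification of a claim the paper leaves to the reader; it does not diverge from the intended argument, it completes it.
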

%%%%%%%%%%%%%%%%%%%%%%%%%%%%%
\begin{proposition} \label{le:ess}
If $0\leq t <1$, then $ \sum
_{i=0}^{\infty}(i+2)(i+1)t^{i}=2/(1-t)^3. $
\end{proposition}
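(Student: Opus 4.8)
The plan is to recognize the series as the second derivative of the geometric series and to invoke the standard fact that a power series may be differentiated term by term inside its disc of convergence. First I would start from the elementary identity $\sum_{i=0}^{\infty} t^i = 1/(1-t)$, valid for $0 \le t < 1$. Since this is a power series with radius of convergence $1$, it converges uniformly on every compact subset of $(-1,1)$, and hence may be differentiated term by term there.

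Differentiating once gives $\sum_{i=1}^{\infty} i\, t^{i-1} = 1/(1-t)^2$, and re-indexing via $i \mapsto i+1$ yields $\sum_{i=0}^{\infty} (i+1) t^{i} = 1/(1-t)^2$. Differentiating a second time gives $\sum_{i=1}^{\infty} (i+1)\, i\, t^{i-1} = 2/(1-t)^3$, and again re-indexing by $i \mapsto i+1$ produces exactly $\sum_{i=0}^{\infty} (i+2)(i+1) t^{i} = 2/(1-t)^3$, which is the claimed identity. Computing the right-hand side is immediate: $\frac{d}{dt}(1-t)^{-1} = (1-t)^{-2}$ and $\frac{d}{dt}(1-t)^{-2} = 2(1-t)^{-3}$, so both differentiation steps match the stated closed forms.

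The only point requiring care is the justification of term-by-term differentiation, which is the main (though entirely routine) obstacle; it follows from the classical theorem that a power series is infinitely differentiable in the interior of its interval of convergence, with derivatives obtained by differentiating each summand. If one prefers a fully self-contained route avoiding differentiation, one may instead evaluate the finite partial sums $\sum_{i=0}^{N}(i+2)(i+1)t^{i}$ in closed form by a telescoping argument and then let $N \to \infty$, using $0 \le t < 1$ to show the remainder tail vanishes; this is more computational but requires no appeal to uniform convergence.
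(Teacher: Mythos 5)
Your argument is correct: the identity is obtained by differentiating the geometric series $\sum_{i=0}^{\infty} t^i = 1/(1-t)$ twice, justified by the standard theorem on term-by-term differentiation of power series inside the interval of convergence, and both re-indexing steps are carried out accurately. The paper, however, does not give any computation at all --- its proof consists of a single citation, namely ``Take $k=2$ in Lemma~3, p.~161 of Blum, Cucker, Shub and Smale,'' where that lemma records the general formula $\sum_{i=0}^{\infty}\binom{i+k}{k}\,k!\,t^{i} = k!/(1-t)^{k+1}$ (equivalently, the $k$-th derivative of the geometric series). So your proof is essentially a self-contained derivation of the special case that the paper outsources to the literature: the citation buys brevity and places the statement in a general family indexed by $k$, while your direct computation makes the paper independent of that reference and exposes exactly which analytic fact (differentiability of a power series in the interior of its disc of convergence) is being used. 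Either route is acceptable; no gap.
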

\begin{proof} Take $k=2$ in Lemma 3, pp. 161 of Blum, et al. \cite{BCSS97}.
\end{proof}
Also, the following auxiliary results of elementary convex analysis
will be needed:
\begin{proposition}
  \label{pr:conv.aux2}
Let $R>0$. If $\varphi:[0, R)\to \mathbb{R}$ is convex, then
$$
D^+ \varphi(0)={\lim}_{u\to 0+} \; \frac{\varphi(u)-\varphi(0)}{u}
={\inf}_{0<u} \;\frac{\varphi(u)-\varphi(0)}{u}. \\
$$
\end{proposition}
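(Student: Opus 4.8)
The plan is to reduce everything to a single monotonicity statement: the difference quotient
\[
g(u):=\frac{\varphi(u)-\varphi(0)}{u}, \qquad u\in(0,R),
\]
is nondecreasing. Once this is established, the two claimed equalities follow from a standard, purely real-analytic fact about monotone functions, together with the fact that $D^+\varphi(0)$ is by definition the right-hand limit $\lim_{u\to 0+} g(u)$. So the entire content of the proposition lies in the monotonicity of $g$, which is the \emph{slope inequality} (three-chord property) for convex functions.

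First I would prove that $g$ is nondecreasing. Fix $0<u_1<u_2<R$ and write $u_1$ as a convex combination of the endpoints $0$ and $u_2$, namely $u_1=\lambda u_2+(1-\lambda)\cdot 0$ with $\lambda=u_1/u_2\in(0,1)$. Applying the definition of convexity of $\varphi$ gives $\varphi(u_1)\leq \lambda\,\varphi(u_2)+(1-\lambda)\varphi(0)$, hence $\varphi(u_1)-\varphi(0)\leq \lambda\bigl(\varphi(u_2)-\varphi(0)\bigr)=(u_1/u_2)\bigl(\varphi(u_2)-\varphi(0)\bigr)$. Dividing by $u_1>0$ yields $g(u_1)\leq g(u_2)$, which is the desired monotonicity.

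It remains to conclude. Since $g$ is nondecreasing on $(0,R)$, its infimum is attained in the limit as $u\to 0+$: for any $\varepsilon>0$ one can pick $u_0$ with $g(u_0)<\inf_{0<u}g(u)+\varepsilon$, and then $g(u)\leq g(u_0)$ for all $0<u\leq u_0$, so the right-hand limit exists in $[-\infty,\infty)$ and equals $\inf_{0<u}g(u)$. Because $D^+\varphi(0)$ is precisely this right-hand limit of $g$, we obtain $D^+\varphi(0)=\lim_{u\to 0+}g(u)=\inf_{0<u}g(u)$, as required.

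I do not anticipate any genuine obstacle here, as the statement is elementary; the only point deserving care is to invoke convexity in the correct direction (expressing the intermediate point $u_1$, rather than $u_2$, as the convex combination) so that the inequality comes out with the right sign, and to phrase the monotone-limit argument so that it remains valid even if the common value is $-\infty$.
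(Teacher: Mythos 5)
Your proof is correct and complete. Note, however, that the paper does not actually prove this proposition: its ``proof'' is a one-line citation to Theorem~4.1.1 of Hiriart-Urruty and Lemar\'echal, so there is no internal argument to compare against. What you have written is precisely the standard argument underlying that reference: the slope inequality $g(u_1)\leq g(u_2)$ obtained by writing $u_1=\lambda u_2+(1-\lambda)\cdot 0$ and invoking convexity, followed by the elementary fact that a nondecreasing function on $(0,R)$ has a right-hand limit at $0$ equal to its infimum (possibly $-\infty$). Your two points of care --- expressing the \emph{intermediate} point as the convex combination so the inequality has the right sign, and allowing the common value to be $-\infty$ --- are exactly the right ones. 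The only thing your write-up buys beyond the paper is self-containedness; the only thing the paper's citation buys is brevity.
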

\begin{proof} See Theorem 4.1.1 on pp. 21 of Hiriart-Urruty and Lemar\'echal \cite{HL93}.
\end{proof}
\begin{proposition}\label{pr:conv.aux1}
Let $\epsilon>0$ and $\tau \in [0,1]$. If $\varphi:[0,\epsilon)
\rightarrow\mathbb{R}$ is convex, then $l:(0,\epsilon) \to
\mathbb{R}$ defined by
$$
l(t)=\frac{\varphi(t)-\varphi(\tau t)}{t},
$$
is increasing.
\end{proposition}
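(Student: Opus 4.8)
The plan is to reduce the statement to the classical monotonicity of chord slopes of a convex function. Fix $0 < t_1 < t_2 < \epsilon$; the goal is to show $l(t_1) \le l(t_2)$. First I would dispose of the degenerate case $\tau = 1$, in which $l(t) = 0$ for every $t$ and the conclusion is immediate. Hence I may assume $\tau \in [0,1)$, so that $1 - \tau > 0$ and $\tau t < t$ for each $t \in (0,\epsilon)$.

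The key observation is that $l$ is, up to a positive constant, a chord slope of $\varphi$. Since $t - \tau t = (1-\tau)t$, I would introduce
$$
S(a,b) := \frac{\varphi(b) - \varphi(a)}{b - a}, \qquad 0 \le a < b < \epsilon,
$$
so that $l(t) = (1-\tau)\, S(\tau t, t)$. Because $1-\tau>0$, it therefore suffices to prove the inequality $S(\tau t_1, t_1) \le S(\tau t_2, t_2)$.

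For this I would invoke the standard three-chord property of convex functions: for $u < v < w$ in $[0,\epsilon)$,
$$
\frac{\varphi(v) - \varphi(u)}{v - u} \le \frac{\varphi(w) - \varphi(u)}{w - u} \le \frac{\varphi(w) - \varphi(v)}{w - v}.
$$
This yields that $S(a,b)$ is non-decreasing in each variable separately: non-decreasing in $b$ for fixed $a$, and non-decreasing in $a$ for fixed $b$. Since $\tau \ge 0$ and $t_1 \le t_2$, I have $\tau t_1 \le \tau t_2$ and $t_1 \le t_2$, with $\tau t_i < t_i$. Moving the right endpoint first and then the left endpoint,
$$
S(\tau t_1, t_1) \le S(\tau t_1, t_2) \le S(\tau t_2, t_2),
$$
where the first step uses monotonicity in $b$ (with $a = \tau t_1$ fixed) and the second uses monotonicity in $a$ (with $b = t_2$ fixed). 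Multiplying by $1-\tau > 0$ gives $l(t_1) \le l(t_2)$, as required.

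The only care needed is in the bookkeeping rather than in any hard estimate. I must split off the case $\tau = 1$ so that the factor $1-\tau$ is strictly positive and the rewriting $l = (1-\tau)S(\tau t,\,t)$ is legitimate, and I must check that the intermediate slope $S(\tau t_1, t_2)$ is well defined, that is $\tau t_1 < t_2$, which holds because $\tau t_1 \le t_1 < t_2$. All the substantive content is carried by the two-variable monotonicity of chord slopes recorded above, which is the usual consequence of convexity; so there is no genuine obstacle, merely the verification of these orderings.
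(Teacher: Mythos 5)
Your proof is correct. The paper does not actually argue this proposition; it simply cites Theorem 4.1.1 and Remark 4.1.2 of Hiriart-Urruty and Lemar\'echal, which record exactly the monotonicity of difference quotients of a convex function that you derive from the three-chord inequality. So your argument is not a different route so much as a self-contained unpacking of the cited result: the identity $l(t)=(1-\tau)S(\tau t,t)$, the separate treatment of $\tau=1$ (and, implicitly, the harmless equality $\tau t_1=\tau t_2$ when $\tau=0$), and the two-step chain $S(\tau t_1,t_1)\le S(\tau t_1,t_2)\le S(\tau t_2,t_2)$ are all sound, and the orderings needed for each application of the three-chord property are verified. The only (cosmetic) remark is that what you establish is that $l$ is non-decreasing, which is the sense in which the paper uses ``increasing,'' as the $\tau=1$ case already forces.
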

\begin{proof} See Theorem 4.1.1 and Remark 4.1.2 on pp. 21 of Hiriart-Urruty and Lemar\'echal \cite{HL93}.
\end{proof}
%%%%%%%%%%%%%%%%%%%%%%%%%%%%%%%%%%%%%%%%%%%%%%%%%%%%%
%%%%%%%%%%%%%%%%%%%%%%%%%%%%%%%%%%%%%%%%%%%%%%%%%%%%%
\section{Local analysis for inexact Gauss-Newton like methods } \label{lkant}
%%%%%%%%%%%%%%%%%%%%%%%%%%%%%%%%%%%%%%%%%%%%%%%%%%%%%
In this section, we will state and prove a local theorem for inexact Gauss-Newton like
methods. Assuming that the function $$\Omega \ni x  \mapsto F(x)^*F(x),$$ has a point stationary $x_*$, we will, under mild conditions, prove that the inexact Gauss-Newton like methods is well defined and that the generated sequence converges linearly to this point stationary. The statement of the theorem is as follows:

\begin{theorem}\label{th:nt}
Let $\Omega\subseteq \banacha$ be an open set,
$F:{\Omega}\to \banachb$ a continuously differentiable
function. Let $x_* \in \Omega,$ $R>0$ and
$$
c:=\|F(x_*)\|, \qquad \beta:=\left\|F'(x_*)^{\dagger}\right\|, \qquad \kappa:=\sup \left\{ t\in [0, R): B(x_*, t)\subset\Omega \right\}.
$$
Suppose that $F'(x_*)^*F(x_*)=0$,
$F '(x_*)$ is injective
and there exists a $f:[0,\; R)\to \mathbb{R}$
continuously differentiable such that
  \begin{equation}\label{Hyp:MH}
\left\|F'(x)-F'(x_*+\tau(x-x_*))\right\| \leq
f'\left(\|x-x_*\|\right)-f'\left(\tau\|x-x_*\|\right),
  \end{equation}
  for  all $\tau \in [0,1]$,  $x\in B(x_*, \kappa)$  and
\begin{itemize}
  \item[{\bf h1)}]  $f(0)=0$ and $f'(0)=-1$;
  \item[{\bf  h2)}]  $f'$ is convex and  strictly increasing;
   \item[{\bf  h3)}]  $\alpha:=\sqrt{2} \,c\, \beta ^2 D^+ f'(0)<1$.
\end{itemize}
Take $0\leq \vartheta<1$, $0\leq \omega_{2}<\omega_{1}$ such that  $\omega_{1}(\alpha+\alpha\vartheta+\vartheta)+\omega_{2}<1 $.
Let the positive constants
$$\nu  :=\sup \left\{t \in[0, R):\beta[f'(t)+1]<1\right\},$$
$$
 \rho :=\sup \bigg\{t\in(0,\nu):(1+\vartheta)\omega_{1}\beta\frac{tf'(t)-f(t)+\sqrt{2}c
 \beta[f'(t)+1]}{t[1-\beta(f'(t)+1)]}+\omega_{1}\vartheta+\omega_{2}<1\bigg\}, \; r :=\min  \left\{\kappa, \, \rho \right\}.  $$
Then, the inexact Gauss-Newton like methods for solving \eqref{eq:p1}, with initial point $x_0\in
B(x_*, r)\backslash \{x_*\}$
\begin{equation} \label{eq:DNSqn}
x_{k+1}={x_k}+S_k, \qquad  B(x_k)S_k=-F'(x_k)^*F(x_k)+r_{k}, \qquad
\; k=0,1,\ldots,
\end{equation}
for the forcing term $\theta_k$ and the following conditions for the residual $r_k$ and the invertible matrix $P_{k}$ preconditioning the linear system in \eqref{eq:DNSqn}
$$
\|P_{k}r_{k}\|\leq \theta_{k}\|P_{k}F'(x_{k})^*F(x_{k})\|,
\qquad  0\leq \theta_{k}\mbox{cond}(P_{k}F'(x_{k})^*F'(x_{k}))\leq
\vartheta,\qquad
\; k=0,1,\ldots,$$
where $B(x_k)$ is an invertible approximation of $F'(x_k)^*F'(x_k)$ satisfying the following conditions
$$
\|B(x_k)^{-1}F'(x_k)^*F'(x_k)\| \leq \omega_{1}, \qquad
\|B(x_k)^{-1}F'(x_k)^*F'(x_k)-I\| \leq \omega_{2}, \qquad
\; k=0,1,\ldots,
$$
is well defined, contained in $B(x_*,r)$, converges to $x_*$ and there holds
  \begin{multline} \label{eq:q2}
    \|x_{k+1}-x_*\| \leq
    (1+\vartheta)\omega_{1}\beta\frac{[f'(\|x_0-x_*\|)\|x_0-x_*\|-f(\|x_0-x_*\|)]}{\|x_0-x_*\|^2[1-\beta(f'(\|x_0-x_*\|)+1)]}{\|x_k-x_*\|}^{2}\\+
    \left(\frac{(1+\vartheta)\omega_1\sqrt{2}c\beta^2[f'(\|x_0-x_*\|)+1]}{\|x_0-x_*\|[1-\beta(f'(\|x_0-x_*\|)+1)]}+\omega_1\vartheta+\omega_2\right)\|x_k-x_*\|,\qquad
    k=0,1,\ldots.
\end{multline}
\end{theorem}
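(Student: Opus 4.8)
The plan is to reduce the whole analysis to a one–dimensional contraction inequality of the form $\|x_{k+1}-x_*\|\le h(\|x_k-x_*\|)\,\|x_k-x_*\|$, where $h$ is exactly the function whose sublevel set $\{t:h(t)<1\}$ defines $\rho$. Write $e_k:=x_k-x_*$, $J_k:=F'(x_k)$ and $C_k:=B(x_k)^{-1}F'(x_k)^*F'(x_k)$, so the hypotheses on $B(x_k)$ read $\|C_k\|\le\omega_1$ and $\|C_k-I\|\le\omega_2$. First I would record the basic perturbation estimates on $B(x_*,\nu)$. Taking $\tau=0$ in \eqref{Hyp:MH} and using $f'(0)=-1$ gives $\|F'(x)-F'(x_*)\|\le f'(\|x-x_*\|)+1$; since $\beta[f'(t)+1]<1$ for $t<\nu$, Lemma \ref{lem:ban2} shows $F'(x)$ is injective and Lemma \ref{lem:ban} gives
\[
\|F'(x)^\dagger\|\le\frac{\beta}{1-\beta(f'(\|x-x_*\|)+1)},\qquad \|F'(x)^\dagger-F'(x_*)^\dagger\|\le\frac{\sqrt2\,\beta^2(f'(\|x-x_*\|)+1)}{1-\beta(f'(\|x-x_*\|)+1)}.
\]
I expect these to be the content of the lemmas relating the majorant function to $F$.

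Next I would estimate the \emph{exact} Gauss–Newton displacement. Since $J_k$ is injective, $J_k^\dagger J_k=I$, whence $e_k-J_k^\dagger F(x_k)=J_k^\dagger\!\int_0^1[F'(x_k)-F'(x_*+t e_k)]e_k\,dt-J_k^\dagger F(x_*)$. The stationarity $F'(x_*)^*F(x_*)=0$ gives $F'(x_*)^\dagger F(x_*)=0$, so $\|J_k^\dagger F(x_*)\|=\|(J_k^\dagger-F'(x_*)^\dagger)F(x_*)\|\le c\,\|J_k^\dagger-F'(x_*)^\dagger\|$; integrating the majorant condition in $t$ and using $f(0)=0$ produces the factor $\|e_k\|f'(\|e_k\|)-f(\|e_k\|)$. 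Combining with the two bounds above yields
\[
\|e_k-J_k^\dagger F(x_k)\|\le \frac{\beta\big[\,\|e_k\|f'(\|e_k\|)-f(\|e_k\|)+\sqrt2\,c\,\beta(f'(\|e_k\|)+1)\,\big]}{1-\beta(f'(\|e_k\|)+1)}.
\]

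The core step is to pass from the exact displacement to the inexact–like step. Using $B(x_k)^{-1}F'(x_k)^*F(x_k)=C_kJ_k^\dagger F(x_k)$, I would decompose
\[
x_{k+1}-x_*=C_k\big(e_k-J_k^\dagger F(x_k)\big)+(I-C_k)e_k+B(x_k)^{-1}r_k,
\]
controlling the first two terms by $\omega_1$ and $\omega_2$. The delicate term is the residual one: writing $B(x_k)^{-1}=C_k(J_k^*J_k)^{-1}$, inserting $r_k=P_k^{-1}(P_kr_k)$ so that $(J_k^*J_k)^{-1}P_k^{-1}=(P_kJ_k^*J_k)^{-1}$, and using $F'(x_k)^*F(x_k)=J_k^*J_k\,J_k^\dagger F(x_k)$ together with the residual and condition-number hypotheses gives
\[
\|B(x_k)^{-1}r_k\|\le\omega_1\,\theta_k\,\mbox{cond}(P_kF'(x_k)^*F'(x_k))\,\|J_k^\dagger F(x_k)\|\le\omega_1\vartheta\,\|J_k^\dagger F(x_k)\|,
\]
and then $\|J_k^\dagger F(x_k)\|\le\|e_k-J_k^\dagger F(x_k)\|+\|e_k\|$. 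Collecting the three pieces produces $\|x_{k+1}-x_*\|\le(1+\vartheta)\omega_1\|e_k-J_k^\dagger F(x_k)\|+(\omega_1\vartheta+\omega_2)\|e_k\|$, and substituting the displacement bound turns this into $\|x_{k+1}-x_*\|\le h(\|e_k\|)\|e_k\|$.

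Finally I would close by induction. The hypotheses h3) and $\omega_1(\alpha+\alpha\vartheta+\vartheta)+\omega_2<1$ are precisely $h(0^+)<1$ (evaluating the limit via Proposition \ref{pr:conv.aux2} and $D^+f'(0)$), so $\rho>0$ and the ball $B(x_*,r)$ is nontrivial. For $x_0\in B(x_*,r)\setminus\{x_*\}$, since $r\le\rho\le\nu$ we get $h(\|e_k\|)<1$ at each step, hence $B(x_k)$ is invertible and $J_k$ injective (Banach's Lemma), the iterate stays in $B(x_*,r)$, and $\{\|e_k\|\}$ is strictly decreasing, hence convergent; because $h$ is nondecreasing, $h(\|e_k\|)\le h(\|e_0\|)<1$ forces $\|e_k\|\to0$. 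To obtain \eqref{eq:q2} I would rewrite the two majorant coefficients as functions of $t=\|e_k\|$ multiplying $\|e_k\|^2$ and $\|e_k\|$, and use convexity of $f'$ through Proposition \ref{pr:conv.aux1} to show that $t\mapsto[tf'(t)-f(t)]/\{t^2[1-\beta(f'(t)+1)]\}$ and $t\mapsto[f'(t)+1]/\{t[1-\beta(f'(t)+1)]\}$ are increasing, so evaluating at $\|e_k\|\le\|e_0\|$ is dominated by the value at $\|e_0\|$. The main obstacle I anticipate is the bookkeeping of the residual term $B(x_k)^{-1}r_k$, where the preconditioner $P_k$ and the condition number must be combined exactly to recover the factor $\omega_1\vartheta$; the monotonicity of the coefficient functions is the second, more routine, technical point.
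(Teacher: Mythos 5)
Your proposal is correct and follows essentially the same route as the paper: the same pseudoinverse perturbation bounds (Banach's lemma plus Stewart's estimate giving the $\sqrt{2}\beta^2$ factor), the same linearization-error and Gauss--Newton-step lemmas, an algebraically identical decomposition of $x_{k+1}-x_*$ into the $\omega_1$-, $\omega_2$- and $\omega_1\vartheta$-controlled pieces, the same preconditioned residual estimate, and the same monotonicity-plus-induction closing argument. No gaps.
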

%%%%%%%%%%%%%%%%%%%%%%%%%%%%%%%%%
\begin{remark}
In particular, if taking $\vartheta=0$ (in this case $\theta_k\equiv 0$ and $r_k\equiv 0$) in Theorem~\ref{th:nt}, we obtain the convergence of Gauss-Newton's like method under majorant condition which, for $\omega_1=1$ and $\omega_2=0$, i.e., $B(x_k)=F'(x_k)^*F'(x_k)$, has been obtained by Ferreira et al. \cite{MAX2}
in Theorem~7.  Now, if taking  $c=0$ (the so-called zero-residual case) and $F'(x_*)$ is invertible, we obtain the convergence of inexact Newton-Like methods under majorant condition, which has been obtained by Ferreira, Gon\c calves \cite{MAX1} in Theorem 4. Finally, if $c=\vartheta=\omega_2=0$, $\omega_1=1$  and $F'(x_*)$ is invertible in Theorem \ref{th:nt}, we obtain the
convergence of  Newton's method under majorant condition, which has been obtained by Ferreira
\cite{F08} in Theorem~2.1.
\end{remark}
For the important case $\vartheta=0$, namely, Gauss-Newton's like method under majorant condition, the Theorem~\ref{th:nt} becomes:
%%%%%%%%%%%%%
\begin{corollary} \label{col:pc1}
Let $\Omega\subseteq \banacha$ be an open set,
$F:{\Omega}\to \banachb$ a continuously differentiable
function. Let $x_* \in \Omega,$ $R>0$ and
$$
c:=\|F(x_*)\|, \qquad \beta:=\left\|F'(x_*)^{\dagger}\right\|, \qquad \kappa:=\sup \left\{ t\in [0, R): B(x_*, t)\subset\Omega \right\}.
$$
Suppose that $F'(x_*)^*F(x_*)=0$,
$F '(x_*)$ is injective
and there exists a $f:[0,\; R)\to \mathbb{R}$
continuously differentiable such that
$$
\left\|F'(x)-F'(x_*+\tau(x-x_*))\right\| \leq
f'\left(\|x-x_*\|\right)-f'\left(\tau\|x-x_*\|\right),
 $$
for  all $\tau \in [0,1]$,  $x\in B(x_*, \kappa)$ and
\begin{itemize}
  \item[{\bf h1)}]  $f(0)=0$ and $f'(0)=-1$;
  \item[{\bf  h2)}]  $f'$ is convex and strictly increasing;
   \item[{\bf  h3)}]  $\alpha:=\sqrt{2} \,c\, \beta ^2 D^+ f'(0)<1$.
\end{itemize}
Take $0\leq \omega_{2}<\omega_{1}$  such that  $\omega_{1}\alpha+\omega_{2}<1 $.
Let $\nu  :=\sup \left\{t \in[0, R):\beta[f'(t)+1]<1\right\},$
$$
 \rho :=\sup \bigg\{t\in(0,\nu):\omega_{1}\beta\frac{tf'(t)-f(t)+\sqrt{2}c
 \beta[f'(t)+1]}{t[1-\beta(f'(t)+1)]}+\omega_{2}<1\bigg\}, \quad r :=\min  \left\{\kappa, \, \rho \right\}.  $$
Then, the  Gauss-Newton's like method for solving \eqref{eq:p1}, with initial point $x_0\in
B(x_*, r)\backslash \{x_*\}$
$$
x_{k+1}={x_k}+S_k, \qquad  B(x_k)S_k=-F'(x_k)^*F(x_k), \qquad
\; k=0,1,\ldots,
$$
where $B(x_k)$ is an invertible approximation of $F'(x_k)^*F'(x_k)$ satisfying
$$
\|B(x_k)^{-1}F'(x_k)^*F'(x_k)\| \leq \omega_{1}, \qquad
\|B(x_k)^{-1}F'(x_k)^*F'(x_k)-I\| \leq \omega_{2}, \qquad
\; k=0,1,\ldots,
$$
is well defined, contained in $B(x_*,r)$, converges to $x_*$ and there holds
  \begin{multline} \label{eq:q2}
    \|x_{k+1}-x_*\| \leq
    \omega_{1}\beta\frac{[f'(\|x_0-x_*\|)\|x_0-x_*\|-f(\|x_0-x_*\|)]}{\|x_0-x_*\|^2[1-\beta(f'(\|x_0-x_*\|)+1)]}{\|x_k-x_*\|}^{2}\\+
    \left(\frac{\omega_1\sqrt{2}c\beta^2[f'(\|x_0-x_*\|)+1]}{\|x_0-x_*\|[1-\beta(f'(\|x_0-x_*\|)+1)]}+\omega_2\right)\|x_k-x_*\|,\qquad
    k=0,1,\ldots.
\end{multline}
\end{corollary}
\begin{remark}
Despite the fact that the above corollary is a special case of Theorem \ref{th:nt}, the results contained therein extend the results of Chen and Li in \cite{chen2}, as the results obtained \cite{chen2} are only for the case $c=0.$
\end{remark}
%%%%%%%%%%%%%%%%%%%%%%%%%%%%%%%%%%%%%%%%%%%%
%%%%%%%%%%%%%%%%%5
\begin{remark}
Assumption \eqref{Hyp:MH} is crucial for our analysis. It should be pointed that, under appropriate regularity conditions in the nonlinear function $F$, assumption \eqref{Hyp:MH} always holds on a suitable neighbourhood of $x_*$. For instance, if  $F$ is two times continuously differentiable, then the majorant function $f:[0, \kappa)\to \mathbb{R}$, as defined by
$
f(t)=Kt^{2}/2-t,
$ where $K=\sup\{\|F''(x)\|: x\in B[x_*, \kappa]\}$ satisfies assumption \eqref{Hyp:MH}.  Estimating the constant $K$ is a very difficult problem. Therefore, the goal is to identify classes of nonlinear functions for which it is possible to obtain a majorant function.  We will give some examples of such classes in Section \ref{sec:ec}.
\end{remark}

%%%%%%%%%%%%%%%%%%%%%%%%%%5
To prove Theorem \ref{th:nt} we need some results. From
here on, we assume that all assumptions of Theorem \ref{th:nt}
hold.
%%%%%%%%%%%%%%%%%%%%%%%%%%%%%%%%%%%%%%%%%%%%%%%%%%%%%%%%%%%%%%%%%%%%%%%
%%%%%%%%%%%%%%%%%%%%%%%%%%%%%%%%%%%%%%%%%%%%%%%%%%%%%%%%%%%%%%%%%%%%%%%%%%
\subsection{The majorant function } \label{sec:PMF}
%%%%%%%%%%%%%%%%%%%%%%%%%%%%%%%%%%%%%%%%%%%%%%%%%%%%%%%%%%%%%%%%%%%%%%%%%%
In this section, we will prove that the constant $\kappa$ associated
with $\Omega$ and the constants  $\nu$,  $\rho$ and $r$
associated with the majorant function $f$ are positive. We will also
 prove some results related to the function $f$.

We begin by noting that $\kappa>0$, because $\Omega$ is an open
set and $x_*\in \Omega$.
%%%%%%%%%%%%
\begin{proposition}  \label{pr:incr1}
%%%%%%%%%%%%%
The constant $  \nu $ is positive and there holds
$$
\beta[f'(t)+1]<1, \qquad t\in (0, \nu).
$$
\end{proposition}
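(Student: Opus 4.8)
The plan is to analyze the scalar function $g:[0,R)\to\mathbb{R}$ defined by $g(t):=\beta[f'(t)+1]$, in terms of which the constant reads $\nu=\sup\{t\in[0,R):g(t)<1\}$. Since $f$ is continuously differentiable, $g$ is continuous, and hypothesis \textbf{h1)} gives $f'(0)=-1$, so that $g(0)=\beta[f'(0)+1]=0<1$. Everything will follow from these two observations together with the monotonicity of $f'$.

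To see that $\nu>0$, I would use the continuity of $g$ at the origin: because $g(0)=0<1$, there is some $\delta>0$ such that $g(t)<1$ for all $t\in[0,\delta)$. Hence $[0,\delta)$ is contained in the set whose supremum defines $\nu$, and therefore $\nu\geq\delta>0$.

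For the pointwise estimate on $(0,\nu)$, I would invoke hypothesis \textbf{h2)}: the derivative $f'$ is strictly increasing, and since $\beta>0$ (as $F'(x_*)$ is injective, its Moore--Penrose inverse is nonzero) the function $g$ is strictly increasing as well. Fix an arbitrary $t\in(0,\nu)$. By the characterization of the supremum, the inequality $t<\nu$ yields some $s$ with $t<s$ and $g(s)<1$; monotonicity of $g$ then gives $\beta[f'(t)+1]=g(t)\leq g(s)<1$, which is exactly the desired inequality.

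I do not expect a genuine obstacle here: positivity of $\nu$ is a direct consequence of the continuity of $f'$ at $0$ together with the normalization $f'(0)=-1$, while the inequality on the whole interval $(0,\nu)$ uses only the monotonicity of $f'$ recorded in \textbf{h2)}. The sole point requiring a little care is the passage from the defining supremum to a concrete witness $s>t$ with $g(s)<1$, which is the standard property of the supremum; note, moreover, that neither the convexity assumed in \textbf{h2)} nor hypothesis \textbf{h3)} is needed for this particular statement.
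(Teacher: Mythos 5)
Your argument is correct and follows essentially the same route as the paper: positivity of $\nu$ from continuity of $f'$ at $0$ together with $f'(0)=-1$, and the inequality on all of $(0,\nu)$ from the monotonicity of $f'$ (hypothesis \textbf{h2}) combined with the definition of $\nu$ as a supremum. You merely spell out the supremum-witness step that the paper leaves implicit.
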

\begin{proof}
 As $f'$ is continuous in $(0,R)$ and $f'(0)=-1,$ it is easy to conclude that
$$\lim _{t \to 0}\beta[f'(t)+1]=0. $$
Thus, there exists a $\delta>0$ such that $\beta(f'(t)+1)<1$ for
all $t\in (0, \delta)$. Hence,  $\nu>0.$

Using {\bf  h2} and definition of $\nu$ the last part of the proposition follows.

\end{proof}
%%%%%%%%%%%%%%%%%%%%%%%%%%%%%%%%%%%%%%
\begin{proposition} \label{pr:incr101}
The following functions are increasing:

\begin{itemize}
 \item[{\bf i)}] $[0,\, R) \ni t \mapsto 1/[1-\beta(f'(t)+1)];$
 \item[{\bf ii)}]  $(0,\, R) \ni t \mapsto [tf'(t)-f(t)]/t^2;$
\item[{\bf iii)}]  $(0,\, R) \ni t \mapsto [f'(t)+1]/t;$
\end{itemize}
As a consequence, there is an increase of the following functions
$$
(0,\, R)\ni t\mapsto \frac{tf'(t)-f(t)}{t^2[1-\beta(f'(t)+1)]},
\qquad  \qquad (0,\, R)\ni t\mapsto
\frac{f'(t)+1}{t[1-\beta(f'(t)+1)]}.
$$
\end{proposition}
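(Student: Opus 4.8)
The goal is to prove Proposition~\ref{pr:incr101}, which asserts the monotonicity of three basic functions built from the majorant function $f$, and then deduces monotonicity of two composite expressions appearing in the definition of $\rho$.

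\textbf{Approach.} The plan is to treat items \textbf{i)}, \textbf{ii)} and \textbf{iii)} separately, using only the structural hypotheses \textbf{h1} and \textbf{h2} on $f$, together with the elementary convex-analysis facts recorded in Propositions~\ref{pr:conv.aux2} and~\ref{pr:conv.aux1}. The key observation throughout is that $f'$ is convex and strictly increasing with $f'(0)=-1$, so that $f'(t)+1$ is nonnegative, vanishes at $0$, and is itself strictly increasing.

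\textbf{Key steps.} For \textbf{i)}, I would argue that since $f'$ is strictly increasing, so is $t\mapsto \beta(f'(t)+1)$; hence $t\mapsto 1-\beta(f'(t)+1)$ is strictly decreasing, and its reciprocal is increasing on the range where the denominator stays positive (which is guaranteed on $(0,\nu)$ by Proposition~\ref{pr:incr1}, though the stated domain is all of $[0,R)$—I would note the quantity is increasing wherever it is well defined and positive). For \textbf{ii)}, the natural move is to write the difference quotient in integral form: using $f(t)=\int_0^t f'(s)\,ds$ (valid since $f(0)=0$ by \textbf{h1}) one gets
\begin{equation*}
\frac{tf'(t)-f(t)}{t^2}=\frac{1}{t^2}\int_0^t \left[f'(t)-f'(s)\right]ds=\int_0^1 \frac{f'(t)-f'(\tau t)}{t}\,d\tau,
\end{equation*}
after the substitution $s=\tau t$. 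Each integrand $\tau\mapsto [f'(t)-f'(\tau t)]/t$ is increasing in $t$ by Proposition~\ref{pr:conv.aux1} applied to the convex function $f'$ (for each fixed $\tau\in[0,1]$), so the integral over $\tau\in[0,1]$ is increasing in $t$. For \textbf{iii)}, I would apply Proposition~\ref{pr:conv.aux1} directly with $\tau=0$ to the convex function $\varphi=f'$: since $f'(0)=-1$, the quotient $[f'(t)-f'(0)]/t=[f'(t)+1]/t$ is increasing.

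\textbf{The composite functions.} For the two consequences, I would observe that each is a product of functions already shown to be increasing and nonnegative. The first factor in each is \textbf{ii)} respectively \textbf{iii)}, and the common second factor is \textbf{i)}; since a product of nonnegative increasing functions is increasing, the conclusion follows. The one point requiring care is nonnegativity: I would note that $tf'(t)-f(t)=\int_0^t[f'(t)-f'(s)]\,ds\geq 0$ because $f'$ is increasing, that $f'(t)+1\geq 0$ since $f'(t)\geq f'(0)=-1$, and that $1/[1-\beta(f'(t)+1)]>0$ on the relevant interval.

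\textbf{Main obstacle.} The routine parts are mechanical; the only genuine subtlety is the correct invocation of Proposition~\ref{pr:conv.aux1} in step \textbf{ii)}. One must recognize that the average $\int_0^1[f'(t)-f'(\tau t)]/t\,d\tau$ is increasing in $t$ because it is an average over $\tau$ of functions each increasing in $t$—the monotonicity in $t$ is what Proposition~\ref{pr:conv.aux1} delivers, with the roles of the two variables carefully distinguished. Getting the integral reformulation and the substitution $s=\tau t$ exactly right, so that Proposition~\ref{pr:conv.aux1} applies term by term, is the crux of the argument.
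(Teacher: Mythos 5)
Your proposal is correct and follows essentially the same route as the paper: item \textbf{i)} from the monotonicity of $f'$, items \textbf{ii)} and \textbf{iii)} via the integral reformulation $[tf'(t)-f(t)]/t^2=\int_0^1[f'(t)-f'(\tau t)]/t\,d\tau$ and Proposition~\ref{pr:conv.aux1}, and the composite functions as products of the preceding ones. Your explicit verification of nonnegativity for the product step, and your remark that the reciprocal in \textbf{i)} is increasing only where the denominator stays positive, are welcome details that the paper leaves implicit.
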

\begin{proof}
 The item~{\bf i} is immediate, because $f'$ is strictly increasing in $[0, R)$.

 For proving item~{\bf ii}, note that after some simple algebraic manipulations we have
  $$
  \frac{tf'(t)-f(t)}{t^2}=\int_{0}^{1}\frac{f'(t)-f'(\tau t)}{t}\; d\tau.
  $$
  So, applying
Proposition~\ref{pr:conv.aux1} with $f'=\varphi$ and $\epsilon=R$ the statement follows.

For establishing  item~{\bf iii} use  $\bf h2$, $f'(0)=-1$ and
Proposition~\ref{pr:conv.aux1} with $f'=\varphi,$ $\epsilon=R$ and
$\tau =0.$

To prove that the functions in the last part are increasing, combine item {\bf i} with {\bf ii} for the first function, and {\bf i} with {\bf iii} for the second function.
\end{proof}
%%%%%%%%%%%%%%%%%%%%%%%%%%%%%%%%%%
\begin{proposition}\label{pr:incr102}
The constant $ \rho $ is positive and there holds
$$
(1+\vartheta)\omega_{1}\beta\frac{tf'(t)-f(t)+\sqrt{2}c
 \beta[f'(t)+1]}{t[1-\beta(f'(t)+1)]}+\omega_{1}\vartheta+\omega_{2}<1, \qquad \forall \; t\in
(0, \, \rho).
$$
\end{proposition}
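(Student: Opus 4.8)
The plan is to split the argument into three parts: reduce the statement to a monotonicity property of the function on the left-hand side, establish positivity of $\rho$ by computing the limit of that function at $0^+$, and then propagate the inequality across all of $(0,\rho)$ using monotonicity together with the supremum definition of $\rho$. Throughout, write
$$
g(t):=(1+\vartheta)\omega_{1}\beta\frac{tf'(t)-f(t)+\sqrt{2}c\beta[f'(t)+1]}{t[1-\beta(f'(t)+1)]}+\omega_{1}\vartheta+\omega_{2},
$$
so that $\rho=\sup\{t\in(0,\nu):g(t)<1\}$ and the claim is $g(t)<1$ for all $t\in(0,\rho)$.

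First I would compute $\lim_{t\to0^+}g(t)$. Splitting the quotient, I use $\tfrac{tf'(t)-f(t)}{t}=f'(t)-f(t)/t\to f'(0)-f'(0)=0$ (from $f(0)=0$ and continuity of $f'$), and $\tfrac{f'(t)+1}{t}=\tfrac{f'(t)-f'(0)}{t}\to D^+f'(0)$ by Proposition~\ref{pr:conv.aux2} applied to the convex function $f'$; meanwhile $1-\beta(f'(t)+1)\to1$. Hence the limit equals $(1+\vartheta)\omega_{1}\beta\cdot\sqrt{2}c\beta D^+f'(0)+\omega_{1}\vartheta+\omega_{2}=(1+\vartheta)\omega_{1}\alpha+\omega_{1}\vartheta+\omega_{2}=\omega_{1}(\alpha+\alpha\vartheta+\vartheta)+\omega_{2}$, which is strictly below $1$ by the standing hypothesis on $\vartheta,\omega_1,\omega_2$. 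Since $g$ is continuous on $(0,\nu)$ (Proposition~\ref{pr:incr1} guarantees the denominator stays positive there) and its limit at $0^+$ is below $1$, there is $\delta>0$ with $g(t)<1$ on $(0,\delta)$, so the defining set is nonempty and $\rho\geq\delta>0$.

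Next I would show $g$ is increasing on $(0,\nu)$. Decompose the fraction as $\tfrac{tf'(t)-f(t)}{t[1-\beta(f'(t)+1)]}+\sqrt{2}c\beta\,\tfrac{f'(t)+1}{t[1-\beta(f'(t)+1)]}$. The second summand is, up to the nonnegative constant $\sqrt{2}c\beta$, exactly the second increasing function of Proposition~\ref{pr:incr101}. For the first, rewrite it as $t\cdot\tfrac{tf'(t)-f(t)}{t^2[1-\beta(f'(t)+1)]}$: here $t$ is positive and increasing, and the second factor is the first increasing function of Proposition~\ref{pr:incr101}, which is nonnegative because $tf'(t)-f(t)=t^2\int_0^1\tfrac{f'(t)-f'(\tau t)}{t}\,d\tau\geq0$ by monotonicity of $f'$. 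A product of nonnegative increasing functions is increasing, so the first summand is increasing. Scaling by the nonnegative constant $(1+\vartheta)\omega_1\beta$ and adding the constant $\omega_1\vartheta+\omega_2$ preserves monotonicity, so $g$ is increasing.

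Finally, monotonicity closes the argument: since $g$ is increasing and $\rho=\sup\{t\in(0,\nu):g(t)<1\}$, this set is downward closed in $(0,\nu)$, so for any $t\in(0,\rho)$ one may pick $t'\in(t,\rho]$ with $g(t')<1$ and conclude $g(t)\leq g(t')<1$. I expect the limit computation in the second step to be the main obstacle, specifically the clean identification of $\lim_{t\to0^+}(f'(t)+1)/t$ with $D^+f'(0)$ through Proposition~\ref{pr:conv.aux2}, and the verification that the residual term reassembles into $\alpha=\sqrt{2}c\beta^2D^+f'(0)$ so that the limiting threshold matches the theorem's hypothesis verbatim.
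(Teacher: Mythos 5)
Your proposal is correct and follows essentially the same route as the paper: compute the limit of the left-hand side as $t\to0^+$ (splitting into the $[tf'(t)-f(t)]/t$ piece, which vanishes, and the $[f'(t)+1]/t$ piece, which tends to $D^+f'(0)$ via Proposition~\ref{pr:conv.aux2}), identify the limit with $\omega_1(\alpha+\alpha\vartheta+\vartheta)+\omega_2<1$ to get $\rho>0$, and then use the monotonicity from Proposition~\ref{pr:incr101} together with the supremum definition of $\rho$ to propagate the inequality over all of $(0,\rho)$. Your explicit justification that the first summand is increasing (writing it as $t$ times the increasing nonnegative function of Proposition~\ref{pr:incr101}) is a detail the paper leaves implicit, but it is the same argument.
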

\begin{proof}
First of all, note that the assumption  $\bf h1$ implies, after simple calculation, that
$$\lim_{t \to 0}\frac{tf'(t)-f(t)}{t[1-\beta(f'(t)+1)]} =\lim_{t \to 0}\frac{f'(t)-(f(t)-f(0))/t}{1-\beta(f'(t)+1)}=0. $$
Again, using $\bf h1$, some algebraic manipulation and that $f'$ is convex, we have by Proposition~\ref{pr:conv.aux2}

$$\lim_{t \to 0}\frac{f'(t)+1}{t[1-\beta(f'(t)+1)]}=\lim_{t \to 0}\frac{(f'(t)-f'(0))/t}{1-\beta(f'(t)+1)}=D^+f'(0).
$$
Hence, by combining the two above equalities it is easy to conclude that
$$
\lim_{t \to 0}(1+\vartheta)\omega_{1}\beta\frac{tf'(t)-f(t)+\sqrt{2}c
 \beta[f'(t)+1]}{t[1-\beta(f'(t)+1)]}+\omega_{1}\vartheta+\omega_{2}=(1+\vartheta)\omega_{1}\sqrt{2}c \beta^2D^+f'(0)+\omega_{1}\vartheta+\omega_{2}.
$$
As, $\alpha=\sqrt{2}c \beta^2D^+f'(0)$ and $\omega_{1}(\alpha+\alpha\vartheta+\vartheta)+\omega_{2}<1$, we obtain
that there exists a $\delta>0$ such that
$$
(1+\vartheta)\omega_{1}\beta\frac{tf'(t)-f(t)+\sqrt{2}c
 \beta[f'(t)+1]}{t[1-\beta(f'(t)+1)]}+\omega_{1}\vartheta+\omega_{2}<1, \qquad
 t\in
(0, \delta),
$$
Hence, $\delta\leq \rho$, which proves the first
statement.
To conclude the proof, we use the definition of $\rho$,
the above inequality, and the last part of Proposition~\ref{pr:incr101}.
\end{proof}

%%%%%%%%%%%%%%%%%%%%%%%%%%%%%%%%%
\subsection{Relationship of the majorant function with the non-linear function} \label{sec:MFNLO}
%%%%%%%%%%%%%%%%%%%%%%%%%%%%%%%%%%%%%%%%%%%%%%%%%%%%%%%%%%%%%%%%%%%%%%%%%%%%%%%%%%%%
In this section we will present the main relationships between
the majorant function $f$ and the function $F$ associated with the nonlinear least square problem.
\begin{lemma} \label{wdns}
Let $x \in \Omega$. If \,$\| x-x_*\|<\min\{\nu,\kappa\}$, then
$F'(x)^* F'(x) $ is invertible and the following
inequalities hold
$$
\left\|F'(x)^{\dagger}\right\|\leq \frac{\beta}{1-\beta [f'(\|
x-x_*\|)+1]}, \qquad  \left\|F'(x)^{\dagger}-F'(x_*)^{\dagger}\right\|
<
\frac{\sqrt{2}\beta^2[f'(\|x-x_*\|)+1]}{1-\beta[f'(\|x-x_*\|)+1]}.
 $$
In particular, $F'(x)^* F'(x)$ is invertible in $B(x_*, r)$.
\end{lemma}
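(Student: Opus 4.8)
The plan is to obtain both estimates as direct consequences of the perturbation Lemmas~\ref{lem:ban2} and~\ref{lem:ban}, after bounding the perturbation $F'(x)-F'(x_*)$ by the majorant data. First I would set $A:=F'(x_*)$, $B:=F'(x)$ and $E:=B-A=F'(x)-F'(x_*)$, recording that $\|A^{\dagger}\|=\beta$ by definition of $\beta$. The starting observation is that taking $\tau=0$ in the majorant condition \eqref{Hyp:MH}, together with $f'(0)=-1$ from \textbf{h1}, yields
$$
\|E\|=\|F'(x)-F'(x_*)\|\le f'(\|x-x_*\|)-f'(0)=f'(\|x-x_*\|)+1,
$$
which is precisely the quantity appearing in the claimed bounds.

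Next I would verify the hypothesis $\|A^{\dagger}\|\|E\|<1$ needed to invoke the two lemmas. Since $\|x-x_*\|<\nu$ (as $\|x-x_*\|<\min\{\nu,\kappa\}$), Proposition~\ref{pr:incr1} gives $\beta[f'(\|x-x_*\|)+1]<1$, and combining this with the bound on $\|E\|$ produces $\|A^{\dagger}\|\|E\|=\beta\|E\|\le\beta[f'(\|x-x_*\|)+1]<1$. From $\|EA^{\dagger}\|\le\|E\|\|A^{\dagger}\|<1$ and the injectivity of $A=F'(x_*)$, Lemma~\ref{lem:ban2} shows that $B=F'(x)$ is injective; since $F'(x)$ has closed image on $\Omega$, its Moore-Penrose inverse $F'(x)^{\dagger}=(F'(x)^*F'(x))^{-1}F'(x)^*$ is well defined, i.e.\ $F'(x)^*F'(x)$ is invertible.

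With $A$ and $B$ both injective with closed image and $\|A^{\dagger}\|\|E\|<1$, Lemma~\ref{lem:ban} applies verbatim and gives $\|B^{\dagger}\|\le\|A^{\dagger}\|/(1-\|A^{\dagger}\|\|E\|)$ and $\|B^{\dagger}-A^{\dagger}\|\le\sqrt{2}\|A^{\dagger}\|^2\|E\|/(1-\|A^{\dagger}\|\|E\|)$. To reach the stated form I would substitute $\|A^{\dagger}\|=\beta$ and use that the maps $s\mapsto\beta/(1-\beta s)$ and $s\mapsto\sqrt{2}\beta^2 s/(1-\beta s)$ are increasing on $[0,1/\beta)$ in order to replace $\|E\|$ by its upper bound $f'(\|x-x_*\|)+1$; this is a monotone substitution, not a fresh computation. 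The final assertion is then immediate: since $\rho\le\nu$ by construction and $r=\min\{\kappa,\rho\}$, every $x\in B(x_*,r)$ satisfies $\|x-x_*\|<r\le\min\{\nu,\kappa\}$, so the first part applies and $F'(x)^*F'(x)$ is invertible throughout $B(x_*,r)$.

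The argument is essentially bookkeeping once the perturbation lemmas are available, so there is no deep obstacle; the one point meriting care is the passage to the \emph{strict} second inequality. The monotone chain above yields only $\le$, and at $x=x_*$ both sides vanish, so the strict form is to be read for $x\ne x_*$ (the only case used subsequently, since the iteration starts in $B(x_*,r)\backslash\{x_*\}$): there $f'$ strictly increasing forces $f'(\|x-x_*\|)+1>f'(0)+1=0$, making the right-hand side strictly positive, and the estimate is applied as a strict majorant in the convergence analysis.
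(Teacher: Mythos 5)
Your proof is correct and follows essentially the same route as the paper's: the decomposition $A=F'(x_*)$, $B=F'(x)$, $E=B-A$, the bound $\|E\|\,\|A^{\dagger}\|\leq\beta[f'(\|x-x_*\|)+1]<1$ via \eqref{Hyp:MH} with $\tau=0$ and Proposition~\ref{pr:incr1}, and then Lemmas~\ref{lem:ban2} and~\ref{lem:ban}. Your closing remark about the second inequality is well taken — the argument only delivers $\leq$ (and both sides vanish at $x=x_*$), a point the paper glosses over but which is harmless since only the non-strict bound is used later.
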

\begin{proof}
Let $x \in \Omega$ such that \,$\| x-x_*\|<\min\{\nu,\kappa\}$. Since $\| x-x_*\|<\nu$, using the definition of $\beta$, the inequality \eqref{Hyp:MH} and last part of Proposition~\ref{pr:incr1} we have
$$
\|F'(x)-F'(x_*)\|\|F'(x_*)^{\dagger}\|\leq
\beta[f'(\| x-x_*\|)-f'(0)] < 1.
$$
For the sake of simplicity, the notations define the following matrices
\begin{equation} \label{eq:daux}
A=F'(x_*), \qquad  B=F'(x), \qquad E=F'(x)-F'(x_*).
\end{equation}
The last definitions, together with the latter inequality, imply that
$$
\|EA^\dagger\|\leq\|E\|\|A^\dagger\| < 1,
$$
which, using that $F'(x_*)$  is injective, implies in view of Lemma \ref{lem:ban2}
that  $F'(x)$ is injective. So, $F'(x)^* F'(x) $ is invertible and by definition of
 $r$ we obtain that $F'(x)^* F'(x)$ is invertible for all $x\in B(x_*, r)$.

We already know that $ F'(x_*)$ and $F'(x)$ are injective. Hence, to conclude the lemma
 use definitions in \eqref{eq:daux} and then combine the above inequality and Lemma \ref{lem:ban}.
\end{proof}
%%%%%%%%%%%%%
%The Gauss-Newton's method is similar to Newton's method, i.e., that each
%iteration at a point happens to be a critical point of the full
%Taylor-series quadratic model of $G(x)$. The difference between the
%methods is that, in the case of Gauss-Newton we use the convenient and
%often effective approximation $F'(x)^*F'(x)$ for the Hessian
%$\nabla^2G(x)$. But, although our interest in solving $\nabla
%G(x)=F'(x)^*F(x)=~0$, we study the linearization error of $F$ at
%point in~$\Omega$

Now, it is convenient to study the linearization error of $F$ at point in~$\Omega$, for which we define
\begin{equation}\label{eq:def.er}
  E_F(x,y):= F(y)-\left[ F(x)+F'(x)(y-x)\right],\qquad y,\, x\in \Omega.
\end{equation}
We will bound this error by the error in the linearization on the
majorant function $f$
\begin{equation}\label{eq:def.erf}
        e_f(t,u):= f(u)-\left[ f(t)+f'(t)(u-t)\right],\qquad t,\,u \in [0,R).
\end{equation}
%%%%%%%%%%%%
\begin{lemma}  \label{pr:taylor}
If  $\|x-x_*\|< \kappa$, then there holds $ \|E_F(x, x_*)\|\leq
e_f(\|x-x_*\|, 0). $
\end{lemma}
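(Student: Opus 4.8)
The plan is to express the linearization error $E_F(x,x_*)$ as an integral over the segment joining $x$ and $x_*$, and then bound the integrand directly using the majorant condition~\eqref{Hyp:MH}. By the fundamental theorem of calculus applied to the $\banachb$-valued function $t\mapsto F(x_*+t(x-x_*))$, I would write
\[
F(x_*)-F(x)=\int_0^1 F'\bigl(x+\tau(x_*-x)\bigr)(x_*-x)\,d\tau,
\]
so that, after adding and subtracting $F'(x)(x_*-x)$, the definition~\eqref{eq:def.er} gives
\[
E_F(x,x_*)=\int_0^1 \left[F'\bigl(x+\tau(x_*-x)\bigr)-F'(x)\right](x_*-x)\,d\tau.
\]

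Next I would pass to norms inside the integral and apply the hypothesis. The subtlety is that~\eqref{Hyp:MH} is stated for points of the form $x_*+\sigma(x-x_*)$, i.e.\ on the segment anchored at $x_*$, whereas the integrand above involves $F'(x+\tau(x_*-x))=F'(x_*+(1-\tau)(x-x_*))$. Setting $\sigma=1-\tau$ identifies $x+\tau(x_*-x)$ with $x_*+\sigma(x-x_*)$ and $x$ itself with $x_*+1\cdot(x-x_*)$, so~\eqref{Hyp:MH} yields
\[
\left\|F'(x)-F'\bigl(x_*+(1-\tau)(x-x_*)\bigr)\right\|\leq f'\bigl(\|x-x_*\|\bigr)-f'\bigl((1-\tau)\|x-x_*\|\bigr),
\]
which is valid since $\|x-x_*\|<\kappa$ keeps the segment inside $B(x_*,\kappa)\subset\Omega$. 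Combining this with $\|x_*-x\|=\|x-x_*\|$ gives the pointwise bound on the integrand.

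Finally I would recognize the resulting scalar integral as exactly $e_f(\|x-x_*\|,0)$. Writing $t=\|x-x_*\|$, the integral bound reads
\[
\|E_F(x,x_*)\|\leq \int_0^1\Bigl[f'(t)-f'\bigl((1-\tau)t\bigr)\Bigr]\,t\,d\tau.
\]
The substitution $s=(1-\tau)t$ converts this to $t\,f'(t)-\int_0^t f'(s)\,ds=t\,f'(t)-\bigl(f(t)-f(0)\bigr)$, and since $f(0)=0$ by hypothesis~\textbf{h1}, this equals $f(0)-[f(t)+f'(t)(0-t)]=e_f(t,0)$ by definition~\eqref{eq:def.erf}. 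The main obstacle is purely bookkeeping: keeping the orientation of the segment consistent so that the change of variable $\sigma=1-\tau$ matches the integrand to the exact form in which~\eqref{Hyp:MH} is stated, and verifying that the final scalar computation reproduces $e_f$ precisely rather than some closely related quantity.
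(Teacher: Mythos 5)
Your proposal is correct and follows essentially the same route as the paper: write $E_F(x,x_*)$ as the integral of $F'(x_*+\sigma(x-x_*))-F'(x)$ over the segment, bound the integrand by the majorant condition \eqref{Hyp:MH}, and evaluate the resulting scalar integral to recover $e_f(\|x-x_*\|,0)$. The only difference is that you make explicit the reparametrization $\sigma=1-\tau$ that the paper absorbs into ``some simple manipulations''; the computation and conclusion are identical.
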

\begin{proof}
 Since $B(x_*, \kappa)$ is convex,  we obtain that $x_*+\tau(x-x_*)\in B(x_*, \kappa)$, for $0\leq \tau \leq 1$.
 Thus, as $F$ is continuously differentiable in $\Omega$, definition of $E_F$ and some simple manipulations yield
$$
\|E_F(x,x_*)\|\leq \int_0 ^1 \left \|
[F'(x)-F'(x_*+\tau(x-x_*))]\right\|\,\left\|x_*-x\right\| \;
d\tau.
$$
>From  the last inequality and the assumption \eqref{Hyp:MH}, we
obtain
$$
\|E_F(x,x_*)\| \leq \int_0 ^1
\left[f'\left(\left\|x-x_*\right\|\right)-f'\left(\tau\|x-x_*\|\right)\right]\|x-x_*\|\;d\tau.
$$
Evaluating the above integral and using definition of $e_f$, the
statement follows.
\end{proof}
%%%%%%%%555
%%%%%%%%%%%%%%%%%%%%%%%%%%%%%%%%%%%%%%
Define the Gauss-Newton step to the functions $F$ by the following equality:
\begin{equation} \label{eq:ns}
S_{F}(x):=-F'(x)^{\dagger}F(x).
\end{equation}
%%%%%%%%%%%%%%%
\begin{lemma}  \label{passonewton}
%%%%%%%%%%%%
If  $\|x-x_*\|< \min\{\nu, \kappa\}$, then $$\|S_{F}(x)\|\leq \frac{\beta e_f(\|x-x_*\|, 0)+\sqrt{2}c\beta^2[f'(\|x-x_*\|)+1]}{1-\beta[f'(\|x-x_*\|)+1]}+\|x-x_{*}\|.$$
\end{lemma}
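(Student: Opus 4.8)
The plan is to bound $\|S_F(x)\|$ by decomposing the Gauss--Newton step in a way that isolates the linearization error of $F$ from the residual term coming from $c=\|F(x_*)\|$. Recall $S_F(x)=-F'(x)^\dagger F(x)$ and that $F'(x_*)^*F(x_*)=0$ means $F'(x_*)^\dagger F(x_*)=0$. The natural move is to write
\begin{equation*}
S_F(x)=-F'(x)^\dagger F(x)=-F'(x)^\dagger\bigl[F(x_*)+F'(x)(x-x_*)+E_F(x,x_*)\bigr]+(x-x_*)-(x-x_*),
\end{equation*}
using the definition $E_F(x,x_*)=F(x_*)-[F(x)+F'(x)(x_*-x)]$ from \eqref{eq:def.er}. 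Since $F'(x)^\dagger F'(x)=I$ (as $F'(x)$ is injective with closed image), the middle term telescopes and one obtains
\begin{equation*}
S_F(x)+(x-x_*)=-F'(x)^\dagger F(x_*)-F'(x)^\dagger E_F(x,x_*).
\end{equation*}

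Next I would take norms and apply the triangle inequality, so that
\begin{equation*}
\|S_F(x)\|\leq \|x-x_*\|+\|F'(x)^\dagger F(x_*)\|+\|F'(x)^\dagger\|\,\|E_F(x,x_*)\|.
\end{equation*}
The three pieces are then handled by the lemmas already proven. For the last summand, Lemma~\ref{wdns} bounds $\|F'(x)^\dagger\|$ by $\beta/(1-\beta[f'(\|x-x_*\|)+1])$, and Lemma~\ref{pr:taylor} bounds $\|E_F(x,x_*)\|$ by $e_f(\|x-x_*\|,0)$; their product yields the first term $\beta e_f(\|x-x_*\|,0)/(1-\beta[f'(\|x-x_*\|)+1])$ in the claimed inequality. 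The term $\|x-x_*\|$ appears verbatim in the conclusion.

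The main obstacle is the term $\|F'(x)^\dagger F(x_*)\|$, which must produce the factor $\sqrt{2}\,c\,\beta^2[f'(\|x-x_*\|)+1]/(1-\beta[f'(\|x-x_*\|)+1])$. The key observation is that $F'(x_*)^\dagger F(x_*)=0$, so I can insert this for free:
\begin{equation*}
F'(x)^\dagger F(x_*)=\bigl[F'(x)^\dagger-F'(x_*)^\dagger\bigr]F(x_*).
\end{equation*}
Taking norms gives $\|F'(x)^\dagger F(x_*)\|\leq \|F'(x)^\dagger-F'(x_*)^\dagger\|\,c$, and here the second estimate of Lemma~\ref{wdns}, namely $\|F'(x)^\dagger-F'(x_*)^\dagger\|<\sqrt{2}\beta^2[f'(\|x-x_*\|)+1]/(1-\beta[f'(\|x-x_*\|)+1])$, delivers exactly the required factor. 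Recognizing that $F(x_*)$ lies in the right subspace for $F'(x_*)^\dagger$ to annihilate it, and hence that the perturbation estimate for Moore--Penrose inverses is what controls this residual contribution, is the crux of the argument; once that substitution is made the remaining steps are routine assembly of the three bounds under the hypothesis $\|x-x_*\|<\min\{\nu,\kappa\}$, which guarantees $\beta[f'(\|x-x_*\|)+1]<1$ via Proposition~\ref{pr:incr1} so that all denominators are positive.
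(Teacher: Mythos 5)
Your proof is correct and follows essentially the same route as the paper: the identical decomposition $S_F(x)=F'(x)^\dagger E_F(x,x_*)-\bigl(F'(x)^\dagger-F'(x_*)^\dagger\bigr)F(x_*)+(x_*-x)$, which exploits $F'(x_*)^\dagger F(x_*)=0$, followed by the triangle inequality and the bounds from Lemmas~\ref{wdns} and~\ref{pr:taylor}. The only blemish is a sign slip in your expansion of $F(x)$ (the definition \eqref{eq:def.er} gives $F(x)=F(x_*)+F'(x)(x-x_*)-E_F(x,x_*)$, not $+E_F(x,x_*)$), which is immaterial once norms are taken.
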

\begin{proof}
Using \eqref{eq:ns}, $F'(x_{*})^{*}F(x_*)=0$ and some algebraic manipulation, it follows from \eqref{eq:def.er} that
\begin{align*}
\|S_{F}(x)\|&=\|F'(x)^{\dagger}\left(F(x_{*})-[F(x)+F'(x)(x_{*}-x)]\right)-(F'(x)^{\dagger}-F'(x_*)^{\dagger})F(x_{*})+(x_{*}-x)\|\\
&\leq \|F'(x)^{\dagger}\|\|E_F(x,x_*)\|+\|F'(x)^{\dagger}-F'(x_*)^{\dagger}\|\|F(x_{*})\|+\|x-x_{*}\|.
\end{align*}
So, the last inequality together with the Lemma~\ref{wdns}, Lemma \ref{pr:taylor} and definition of $c,$ imply that
$$\|S_{F}(x)\|\leq
\frac{\beta e_f(\|x-x_*\|, 0)}{1-\beta[f'(\|x-x_*\|)+1]}+\frac{\sqrt{2}c\beta^2[f'(\|x-x_*\|)+1]}{1-\beta[f'(\|x-x_*\|)+1]}+\|x-x_{*}\|,$$
which is equivalent to the desired inequality.
 \end{proof}
%%%%%%%%%%%%%%%%%%%%%%%%%%%%%%%%%%%%%%
\begin{lemma} \label{l:wdef}
Let $\Omega\subseteq \banacha$ be an open set and $F:{\Omega}\to \banachb$ a continuously differentiable function. Let  $x_*\in \Omega,$   $R>0$ and $c,$  $\beta, $ $\kappa$ as a definition in Theorem~\ref{th:nt}. Suppose that $F'(x_*)^*F(x_*)=0$, $F '(x_*)$ is injective and there exists a $f:[0,\; R)\to \mathbb{R}$ continuously differentiable satisfying \eqref{Hyp:MH}, {\bf h1,}  {\bf h2} and {\bf h3}.  Let $\alpha$, $\vartheta$,  $\omega_{1}$, $\omega_{2}$,  $\nu$, $\rho$ and $r$ as in Theorem~\ref{th:nt}. Assume that $x\in B(x_*, r)\backslash \{x_*\}$, i.e., $0<\|x-x_*\|< r$.  Define
\begin{equation} \label{eq:DNSqnG}
x_{+}={x}+S, \qquad  B(x)S=-F'(x)^*F(x)+r,
\end{equation}
where $B(x)$ is an invertible approximation of $F'(x)^*F'(x)$ satisfying
\begin{equation}\label{con:qnG}
\|B(x)^{-1}F'(x)^*F'(x)\| \leq \omega_{1}, \qquad
\|B(x)^{-1}F'(x)^*F'(x)-I\| \leq \omega_{2},
\end{equation}
and the forcing term $\theta$ and the residual $r$ satisfy
\begin{equation}\label{eq:ERROqnG}
\theta \mbox{cond}(P F'(x)^*F'(x))\leq\vartheta, \qquad  \|P r\|\leq \theta\|P F'(x)^*F(x)\|,
\end{equation}
with $P$  an invertible matrix(preconditioner for the linear system in \eqref{eq:DNSqnG}). Then $x_{+}$ is well defined and there holds
  \begin{multline}\label{tt1}
    \|x_{+}-x_*\| \leq
    (1+\vartheta)\omega_{1}\beta\frac{[f'(\|x-x_*\|)\|x-x_*\|-f(\|x-x_*\|)]}{\|x-x_*\|^2[1-\beta(f'(\|x-x_*\|)+1)]}{\|x-x_*\|}^{2}\\+
    \left(\frac{(1+\vartheta)\omega_1\sqrt{2}c\beta^2[f'(\|x-x_*\|)+1]}{\|x-x_*\|[1-\beta(f'(\|x-x_*\|)+1)]}+\omega_1\vartheta+\omega_2\right)\|x-x_*\|,\qquad
    k=0,1,\ldots.
\end{multline}
In particular,
$$
\|x_{+}-x_{*}\|< \|x-x_*\|.
$$
\end{lemma}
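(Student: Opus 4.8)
The plan is to first secure well-definedness and then isolate, by an exact algebraic decomposition of $x_+-x_*$, three contributions: one controlled by $\omega_2$, a ``Gauss--Newton'' term controlled by $\omega_1$, and a residual term controlled by the product $\omega_1\vartheta$. Since $0<\|x-x_*\|<r=\min\{\kappa,\rho\}\le\rho\le\nu$, so in particular $\|x-x_*\|<\min\{\nu,\kappa\}$, Lemma~\ref{wdns} guarantees that $G:=F'(x)^*F'(x)$ is invertible; together with the hypothesis that $B(x)$ is invertible, this makes the linear system $B(x)S=-F'(x)^*F(x)+r$ uniquely solvable, so $x_+=x+S$ is well defined.

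Next I would rewrite the step. Using $S_F(x)=-F'(x)^{\dagger}F(x)=-G^{-1}F'(x)^*F(x)$ (so that $F'(x)^*F(x)=-GS_F(x)$), one obtains $S=B(x)^{-1}GS_F(x)+B(x)^{-1}r$, whence the key identity
\[
x_+-x_*=\bigl[I-B(x)^{-1}G\bigr](x-x_*)+B(x)^{-1}G\bigl[(x-x_*)+S_F(x)\bigr]+B(x)^{-1}r.
\]
Taking norms and invoking \eqref{con:qnG} gives $\|[I-B(x)^{-1}G](x-x_*)\|\le\omega_2\|x-x_*\|$ and $\|B(x)^{-1}G[(x-x_*)+S_F(x)]\|\le\omega_1\|(x-x_*)+S_F(x)\|$. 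For the middle factor I would reuse the identity established inside the proof of Lemma~\ref{passonewton}, namely $(x-x_*)+S_F(x)=F'(x)^{\dagger}E_F(x,x_*)-(F'(x)^{\dagger}-F'(x_*)^{\dagger})F(x_*)$ (which rests on $F'(x_*)^*F(x_*)=0$), and then bound it by Lemma~\ref{wdns}, Lemma~\ref{pr:taylor} and $c=\|F(x_*)\|$, obtaining $\|(x-x_*)+S_F(x)\|\le\Phi(\|x-x_*\|)$ with $\Phi(t):=\frac{\beta e_f(t,0)+\sqrt{2}\,c\beta^2[f'(t)+1]}{1-\beta[f'(t)+1]}$.

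The delicate step, and the one I expect to be the main obstacle, is the residual term $B(x)^{-1}r$, where the preconditioner $P$ must be made to disappear. Writing $r=P^{-1}(Pr)$ gives $G^{-1}r=(PG)^{-1}(Pr)$, so $\|G^{-1}r\|\le\|(PG)^{-1}\|\,\|Pr\|$; then \eqref{eq:ERROqnG} together with $\|PF'(x)^*F(x)\|=\|PGS_F(x)\|\le\|PG\|\,\|S_F(x)\|$ yields $\|G^{-1}r\|\le\theta\,\|(PG)^{-1}\|\,\|PG\|\,\|S_F(x)\|=\theta\,\mathrm{cond}(PG)\,\|S_F(x)\|\le\vartheta\|S_F(x)\|$. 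Finally $\|B(x)^{-1}r\|=\|B(x)^{-1}G\,G^{-1}r\|\le\omega_1\vartheta\|S_F(x)\|$, and Lemma~\ref{passonewton} bounds $\|S_F(x)\|\le\Phi(\|x-x_*\|)+\|x-x_*\|$.

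Collecting the three contributions gives $\|x_+-x_*\|\le(1+\vartheta)\omega_1\Phi(\|x-x_*\|)+(\omega_1\vartheta+\omega_2)\|x-x_*\|$; substituting $e_f(t,0)=tf'(t)-f(t)$ (from {\bf h1}, since $f(0)=0$) and factoring out one power of $\|x-x_*\|$ reproduces exactly \eqref{tt1}. The final claim $\|x_+-x_*\|<\|x-x_*\|$ then follows by factoring $\|x-x_*\|$ out of the right-hand side and applying Proposition~\ref{pr:incr102}, since $0<\|x-x_*\|<r\le\rho$ forces the resulting bracketed coefficient to be strictly less than $1$.
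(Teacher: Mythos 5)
Your proposal is correct and follows essentially the same route as the paper: well-definedness via Lemma~\ref{wdns}, a decomposition of $x_+-x_*$ into an $\omega_2$-term, a Gauss--Newton error term bounded through $E_F$ and the pseudo-inverse difference (Lemmas~\ref{wdns} and \ref{pr:taylor}), and a residual term reduced to $\omega_1\vartheta\|S_F(x)\|$ by the same condition-number manipulation of the preconditioner, finishing with Lemma~\ref{passonewton} and Proposition~\ref{pr:incr102}. The only cosmetic difference is that you package the second and fourth terms of the paper's decomposition as $B(x)^{-1}G[(x-x_*)+S_F(x)]$ and reuse the identity from the proof of Lemma~\ref{passonewton}, which is algebraically equivalent.
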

\begin{proof}
First note that, as $\|x-x_*\|<r$, it follows from Lemma \ref{wdns} that $F'(x)^*F'(x)$ is invertible. Now, let $B(x)$ an invertible approximation of it satisfying \eqref{con:qnG}. Thus, $x_{+}$ is well defined. Now, as $F'(x_*)^*F(x_*)=0,$ some simple algebraic manipulation and \eqref{eq:DNSqnG} yield
\begin{multline*}
x_{+}-x_{*}=  x -x_* - B(x)^{-1}F'(x)^*\big(F(x)-F(x_*)\big)+B(x)^{-1}{r}\\
+B(x)^{-1}F'(x)^*F'(x)\left[F'(x_*)^{\dagger} F(x_*)- F'(x)^{\dagger}F(x_*)\right].
\end{multline*}
Again, some algebraic manipulation in the above equation gives
\begin{multline*}
x_{+}-x_{*}=B(x)^{-1}F'(x)^*F'(x) F'(x)^\dagger\big(F(x_*)-[F(x)+F'(x)(x_{*}-x)]\big)+B(x)^{-1}{r}\\
+B(x)^{-1}\left(F'(x)^*F'(x)-B(x)\right)(x-x_*) +B(x)^{-1}F'(x)^*F'(x)[F'(x_*)^\dagger F(x_*)- F'(x)^\dagger F(x_*)].
\end{multline*}
The last equation, together with  \eqref{eq:def.er} and \eqref{con:qnG}, imply that
\begin{multline*}
\|x_{+}-x_{*}\|\leq \omega_1 \|F'(x)^\dagger\| \|E_{F}(x,x_{*})\|+\|B(x)^{-1}{r}\|
+\omega_2\|x-x_*\|+\omega_1\|F'(x)^\dagger-F'(x_*)^\dagger\|\|F(x_*)\|.
\end{multline*}
On the other hand, using \eqref{eq:ns}, \eqref{con:qnG} and \eqref{eq:ERROqnG} we have, by simple calculus,
\begin{align*}
\|B(x)^{-1}{r}\|&\leq\|B(x)^{-1}P^{-1}\|\|P{r}\|\\
&\leq\theta\|B(x)^{-1}F'(x)^*F'(x)\|\|(PF'(x)^*F'(x))^{-1}\|\|PF'(x)^*F'(x)\|\|F'(x)^\dagger F(x)\|\\
&\leq\omega_1\vartheta\|S_F(x)\|.
\end{align*}
Hence, it follows from the two last equations that
$$
\|x_{+}-x_{*}\|\leq \omega_1 \|F'(x)^\dagger\| \|E_{F}(x,x_{*})\|
+\omega_1\vartheta\|S_F(x)\|+\omega_2\|x-x_*\|+\omega_1\|F'(x)^\dagger-F'(x_*)^\dagger\|\|F(x_*)\|.
$$
%%%%%%%%%%%%%%%%%%%%%%%%
Combining the last equation with the Lemmas~\ref{wdns}, \ref{pr:taylor} and \ref{passonewton}, we obtain that
$$
\|x_{+}-x_{*}\|\leq (1+\vartheta)\beta\omega_1\frac{e_f(\|x-x_*\|,
0)+\sqrt{2}c
\beta(f'(\|x-x_*\|)+1)}{1-\beta(f'(\|x-x_*\|)+1)}+\omega_1\vartheta\|x-x_*\|+\omega_2\|x-x_*\|.
$$
Now, using \eqref{eq:def.erf} and some algebraic manipulation, we conclude from the last inequality that
\begin{multline*}
\|x_{+}-x_{*}\|\leq  (1+\vartheta)\beta\omega_1\frac{f'(\|x-x_*\|)\|x-x_*\|-f(\|x-x_*\|)+\sqrt{2}c
\beta(f'(\|x-x_*\|)+1)}{1-\beta(f'(\|x-x_*\|)+1)}\\+\omega_1\vartheta\|x-x_*\|+\omega_2\|x-x_*\|,
\end{multline*}
which is equivalent to \eqref{tt1}.
To end the proof, note that the right hand side of \eqref{tt1} is equivalent to
$$\Bigg[(1+\vartheta)\omega_1\beta\frac{f'(\|x-x_*\|)\|x-x_*\|-f(\|x-x_*\|)+\sqrt{2}c
\beta(f'(\|x-x_*\|)+1)}{\|x-x_*\|[1-\beta(f'(\|x-x_*\|)+1)]}+\omega_1\vartheta+\omega_2\Bigg]\|x-x_*\|.$$
On the other hand, as $x\in B(x_*,r)/\{x_*\}$, i.e., $0<\|x-x_*\|<r\leq \rho$ we apply the Proposition~\ref{pr:incr102} with $t=\|x-x_*\|$ to conclude that the quantity in the bracket above is less than one. So, the last inequality of the lemma follows.
\end{proof}
%%%%%%%%%%%%%%%%%%%%%%%%%
\subsection{Proof of {\bf Theorem \ref{th:nt}}} \label{sec:proof}
%%%%%%%%%%%%%%%%%%%%%%%%%%%%%%%%%%%%%%%%%%%%%%%%%%%%%%%%%%%%%%%%%%%%
Now, we will produce the proof of Theorem  \ref{th:nt}.
\begin{proof}
Since  $x_0\in B(x_*,r)/\{x_*\},$ i.e., $0<\|x_0-x_*\|<r,$ by combination of
Lemma~\ref{wdns}, last inequality in Lemma~\ref{l:wdef} and induction argument, it is easy to see that  $\{x_k\}$ is well defined and remains in $B(x_*,r)$.

We are going to prove that  $\{x_k\}$ converges towards $x_*$.
As, $\{x_k\}$ is well defined and contained in  $B(x_*,r)$,
applying Lemma~\ref{l:wdef} with $x_+=x_{k+1},$ $x=x_{k},$ $r=r_{k},$ $B(x)=B(x_{k}),$ $P=P_k,$ and $\theta=\theta_{k}$ we obtain
\begin{multline*}
    \|x_{k+1}-x_*\| \leq
    (1+\vartheta)\omega_{1}\beta\frac{[f'(\|x_k-x_*\|)\|x_k-x_*\|-f(\|x_k-x_*\|)]}{\|x_k-x_*\|^2[1-\beta(f'(\|x_k-x_*\|)+1)]}{\|x_k-x_*\|}^{2}\\+
    \left(\frac{(1+\vartheta)\omega_1\sqrt{2}c\beta^2[f'(\|x_k-x_*\|)+1]}{\|x_k-x_*\|[1-\beta(f'(\|x_k-x_*\|)+1)]}+\omega_1\vartheta+\omega_2\right)\|x_k-x_*\|,\qquad
    k=0,1,\ldots.
 \end{multline*}
Now, using the last inequality of Lemma~\ref{l:wdef}, it is easy to conclude that
\begin{equation}\label{tt}
\|x_{k}-x_*\|< \|x_0-x_*\|, \qquad \;k=1, 2 \ldots.
 \end{equation}
Hence, combining the last two inequalities with the last part of
Proposition~\ref{pr:incr101} we obtain that
\begin{multline*}
    \|x_{k+1}-x_*\| \leq
    (1+\vartheta)\omega_{1}\beta\frac{[f'(\|x_0-x_*\|)\|x_0-x_*\|-f(\|x_0-x_*\|)]}{\|x_0-x_*\|^2[1-\beta(f'(\|x_0-x_*\|)+1)]}{\|x_k-x_*\|}^{2}\\+
    \left(\frac{(1+\vartheta)\omega_1\sqrt{2}c\beta^2[f'(\|x_0-x_*\|)+1]}{\|x_0-x_*\|[1-\beta(f'(\|x_0-x_*\|)+1)]}+\omega_1\vartheta+\omega_2\right)\|x_k-x_*\|,\qquad
    k=0,1,\ldots,
 \end{multline*}
 which is the inequality \eqref{eq:q2}. Now, using \eqref{tt} and the last inequality we have
 \begin{multline*}
    \|x_{k+1}-x_*\| \leq \\
    \Bigg[(1+\vartheta)\omega_1\beta\frac{f'(\|x_0-x_*\|)\|x_0-x_*\|-f(\|x_0-x_*\|)+\sqrt{2}c
\beta(f'(\|x_0-x_*\|)+1)}{\|x_0-x_*\|[1-\beta(f'(\|x_0-x_*\|)+1)]}+\omega_1\vartheta+\omega_2\Bigg]\|x_k-x_*\|,
\end{multline*}
 for all  $k=0,1,\ldots$. Applying Proposition
\ref{pr:incr102} with $t=\|x_0-x_*\|$ it is straightforward to
conclude from the latter inequality that $\{\|x_{k}-x_*\|\}$
converges to zero. So, $\{x_k\}$ converges to $x_*$.
\end{proof}
%%%%%%%%%%%%%%%%%%%%%%%%%%%%%%%%%%%%%%%%%%%%%%%%%%%%%%%%%%%%%%%%%%%%%%%%%%%%%%%%%%%%%%
\section{Special cases} \label{sec:ec}
%%%%%%%%%%%%%%%%%%%%%%%%%%%%%%%%%%%%%%%
In this section, we present two special cases of
Theorem~\ref{th:nt}. They include the classical convergence theorem on Gauss-Newton's method under the Lipschitz condition and Smale's theorem on Gauss-Newton for analytical functions.

\subsection{Convergence result for Lipschitz condition}
In this section we show a correspondent theorem for Theorem
\ref{th:nt} under the Lipschitz condition, instead of the general assumption
\eqref{Hyp:MH}.

\begin{theorem}\label{th:ntqnnm}
Let $\Omega\subseteq \banacha$ be an open set,
$F:{\Omega}\to \banachb$ a continuously differentiable
function. Let $x_* \in \Omega,$ $R>0$ and
$$
c:=\|F(x_*)\|, \qquad \beta:=\left\|F'(x_*)^{\dagger}\right\|, \qquad \kappa:=\sup \left\{ t\in [0, R): B(x_*, t)\subset\Omega \right\}.
$$
Suppose that $F'(x_*)^*F(x_*)=0$, $F '(x_*)$ is injective
and there exists a $K>0$ such that
$$
\alpha:=\sqrt{2}c\beta^2 K<1, \qquad \qquad \left\|F'(x)-F'(y)\right\| \leq K\|x-y\|, \qquad \forall\; x,
y\in B(x_*, \kappa).$$
Take $0\leq \vartheta<1$, $0\leq \omega_{2}<\omega_{1}$  such that  $\omega_{1}(\alpha+\alpha\vartheta+\vartheta)+\omega_{2}<1 $. Let
$$r:=\min\left\{\kappa,\frac{2(1-\omega_{1}\vartheta-\omega_{2})-2\sqrt{2}cK\beta^2\omega_{1}(1+\vartheta)}{\beta K\left(2+\omega_{1}-\vartheta\omega_{1}-2\omega_{2}\right)}\right\}.$$
Then, the inexact Gauss-Newton like methods for solving \eqref{eq:p1}, with initial point $x_0\in
B(x_*, r)\backslash \{x_*\}$
\begin{equation} \label{eq:DNSqn223}
x_{k+1}={x_k}+S_k, \qquad  B(x_k)S_k=-F'(x_k)^*F(x_k)+r_{k}, \qquad
\; k=0,1,\ldots,
\end{equation}
with the following conditions for the residual $r_k,$ and the forcing term $\theta_k$
$$
\|P_{k}r_{k}\|\leq \theta_{k}\|P_{k}F'(x_{k})^*F(x_{k})\|,
\qquad  0\leq \theta_{k}\mbox{cond}(P_{k}F'(x_{k})^*F'(x_{k}))\leq
\vartheta,\qquad
\; k=0,1,\ldots,$$
where  $\{P_{k}\}$ is an invertible matrix sequence (preconditoners for the linear system in \eqref{eq:DNSqn223}) and
$B(x_k)$ is an invertible approximation of $F'(x_k)^*F'(x_k)$ satisfying
$$
\|B(x_k)^{-1}F'(x_k)^*F'(x_k)\| \leq \omega_{1}, \qquad
\|B(x_k)^{-1}F'(x_k)^*F'(x_k)-I\| \leq \omega_{2}, \qquad
\; k=0,1,\ldots,
$$
is well defined, contained in $B(x_*,r)$, converges to $x_*$ and there holds
$$   \|x_{k+1}-x_*\| \leq
    \frac{(1+\vartheta)\beta\omega_{1} K}{2(1-\beta K\|x_0-x_*\|)}\|x_k-x_*\|^2+\left(\frac{(1+\vartheta)\omega_{1}\sqrt{2}c \beta^2K}{1-\beta K\|x_0-x_*\|}+\omega_1\vartheta+\omega_2\right)\|x_k-x_*\|,
$$
for all $ k=0,1,\ldots.$
\end{theorem}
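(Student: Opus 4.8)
The plan is to deduce Theorem~\ref{th:ntqnnm} directly from Theorem~\ref{th:nt} by exhibiting the quadratic majorant function that the Lipschitz hypothesis induces. Concretely, I would set
$$f(t)=\frac{K}{2}\,t^{2}-t,\qquad t\in[0,R).$$
The first step is to check that this $f$ satisfies assumption \eqref{Hyp:MH} together with \textbf{h1}, \textbf{h2}, \textbf{h3}. For \eqref{Hyp:MH}, the Lipschitz bound gives, for $x\in B(x_*,\kappa)$ and $\tau\in[0,1]$,
$$\left\|F'(x)-F'(x_*+\tau(x-x_*))\right\|\leq K\,(1-\tau)\|x-x_*\|,$$
while $f'(t)=Kt-1$ yields $f'(\|x-x_*\|)-f'(\tau\|x-x_*\|)=K(1-\tau)\|x-x_*\|$, so \eqref{Hyp:MH} holds (in fact with equality). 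Conditions \textbf{h1}, \textbf{h2} are immediate from $f(0)=0$, $f'(0)=-1$, and the fact that $f'$ is affine with positive slope $K$; since $f'$ is differentiable, $D^{+}f'(0)=f''(0)=K$, whence $\alpha=\sqrt{2}c\beta^{2}D^{+}f'(0)=\sqrt{2}c\beta^{2}K$, matching the $\alpha$ of the statement and giving \textbf{h3}.

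The second step is to evaluate the constants $\nu$, $\rho$, $r$ of Theorem~\ref{th:nt} for this particular $f$. The key simplifications are $f'(t)+1=Kt$ and $tf'(t)-f(t)=Kt^{2}/2$, which turn the defining inequality of $\rho$ into
$$(1+\vartheta)\omega_{1}\beta\,\frac{Kt/2+\sqrt{2}c\beta K}{1-\beta Kt}+\omega_{1}\vartheta+\omega_{2}<1 .$$
Because $f$ is quadratic the expression inside becomes affine in $t$, so I would clear the denominator $1-\beta Kt$ (positive for $t\in(0,\nu)$ by Proposition~\ref{pr:incr1}, since there $\beta[f'(t)+1]=\beta Kt<1$) and solve the resulting linear inequality. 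The threshold value obtained is exactly
$$\frac{2(1-\omega_{1}\vartheta-\omega_{2})-2\sqrt{2}cK\beta^{2}\omega_{1}(1+\vartheta)}{\beta K\left(2+\omega_{1}-\vartheta\omega_{1}-2\omega_{2}\right)},$$
so that $r=\min\{\kappa,\rho\}$ coincides with the $r$ of the statement. Here one must also verify positivity: the numerator equals $2[\,1-\omega_{1}\vartheta-\omega_{2}-\omega_{1}\alpha(1+\vartheta)\,]$, positive precisely because of the standing assumption $\omega_{1}(\alpha+\alpha\vartheta+\vartheta)+\omega_{2}<1$, while the denominator is positive since $\vartheta<1$ and $\omega_{2}<\omega_{1}$ force $2+\omega_{1}(1-\vartheta)-2\omega_{2}>0$.

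Finally, the convergence estimate follows by substituting the same identities into the bound \eqref{eq:q2}. Writing $s:=\|x_{0}-x_*\|$, the quadratic coefficient reduces via $\frac{sf'(s)-f(s)}{s^{2}[1-\beta(f'(s)+1)]}=\frac{K}{2(1-\beta Ks)}$ and the linear coefficient via $\frac{\sqrt{2}c\beta^{2}(f'(s)+1)}{s[1-\beta(f'(s)+1)]}=\frac{\sqrt{2}c\beta^{2}K}{1-\beta Ks}$, which together with the factors $(1+\vartheta)\omega_{1}$ yield exactly the displayed inequality of Theorem~\ref{th:ntqnnm}; well-definedness, containment in $B(x_*,r)$, and convergence to $x_*$ are then inherited verbatim from Theorem~\ref{th:nt}. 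I expect the only delicate part to be the algebraic inversion of the inequality defining $\rho$ together with the accompanying sign checks; everything else is a clean specialization of the already-proven majorant theorem, made routine by the fact that the quadratic choice of $f$ collapses every occurrence of $tf'(t)-f(t)$ and $f'(t)+1$ to a single monomial in $t$.
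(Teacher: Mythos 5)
Your proposal is correct and follows essentially the same route as the paper: the paper's proof also specializes Theorem~\ref{th:nt} to the quadratic majorant $f(t)=Kt^{2}/2-t$, identifies $\nu=1/(\beta K)$ and the same closed-form value of $\rho$, and then invokes Theorem~\ref{th:nt} verbatim. Your write-up merely makes explicit the algebra (the identities $f'(t)+1=Kt$, $tf'(t)-f(t)=Kt^{2}/2$, and the sign checks) that the paper dismisses as ``immediate'' and ``easy to see.''
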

 \begin{proof}
It is immediate to prove that  $F$, $x_*$ and $f:[0,
\kappa)\to \mathbb{R}$ as defined by $ f(t)=Kt^{2}/2-t, $ satisfy the
inequality \eqref{Hyp:MH}, conditions {\bf h1} and {\bf h2}. Since $\sqrt{2}c\beta^2 K<1$  the condition {\bf h3} also holds. In this case, it is easy to see that constants
$\nu$ and $\rho$ as defined in Theorem~\ref{th:nt}, satisfy
$$0<\rho=\frac{2(1-\omega_{1}\vartheta-\omega_{2})-2\sqrt{2}cK\beta^2\omega_{1}(1+\vartheta)}{\beta K\left(2+\omega_{1}-\vartheta\omega_{1}-2\omega_{2}\right)}  \leq \nu=1/\beta K ,$$
as a consequence,
$
0<r=\min \{\kappa,\,\rho\}.
$
Therefore, as  $F$, $r$,  $f$ and $x_*$ satisfy all of the
hypotheses of Theorem \ref{th:nt}, taking  $x_0\in B(x_*,
r)\backslash \{x_*\}$ the statements of the theorem follow from
Theorem~\ref{th:nt}.
\end{proof}
%\begin{remark}
%In particular, if taking  $\vartheta=\omega_2=0$ and $\omega_1=1$  in Theorem~\ref{th:ntqnnm}, we  obtain that $r=\big(2-2\sqrt{2}cK\beta^2\big)/\big(3K\beta\big),$ which was obtained by Ferreira et al. \cite{MAX2} to study the Gauss-Newton's method.
% Now, if taking  $c=0$  and $F'(x_*)$ is invertible in Theorem~\ref{th:ntqnnm}, we obtain that $r=2(1-\omega_{1}\vartheta-\omega_{2})/{\beta K(2+\omega_{1}-\vartheta\omega_{1}-2\omega_{2})},$ which was obtained by Ferreira and Goncalves \cite{MAX1} to study the inexact Newton-like methods.
%Finally, if $c=\vartheta=\omega_2=0$, $\omega_1=1$ and $F'(x_*)$ is invertible in Theorem~\ref{th:ntqnnm}, we obtain that $r=2/3K\beta$
%\end{remark}

For  the case $\vartheta=0$,  the Theorem \ref{th:ntqnnm} becomes:
\begin{corollary}\label{cor:li1}
Let $\Omega\subseteq \banacha$ be an open set,
$F:{\Omega}\to \banachb$ a continuously differentiable
function. Let $x_* \in \Omega,$ $R>0$ and
$$
c:=\|F(x_*)\|, \qquad \beta:=\left\|F'(x_*)^{\dagger}\right\|, \qquad \kappa:=\sup \left\{ t\in [0, R): B(x_*, t)\subset\Omega \right\}.
$$
Suppose that $F'(x_*)^*F(x_*)=0$, $F '(x_*)$ is injective
and there exists a $K>0$ such that
$$
\alpha:=\sqrt{2}c\beta^2 K<1, \qquad \qquad \left\|F'(x)-F'(y)\right\| \leq K\|x-y\|, \qquad \forall\; x,
y\in B(x_*, \kappa).$$
Take  $0\leq \omega_{2}<\omega_{1}$  such that  $\omega_{1}\alpha+\omega_{2}<1 $. Let
$$r:=\min\left\{\kappa,\frac{2(1-\omega_{2})-2\sqrt{2}cK\beta^2\omega_{1}}{\beta K\left(2+\omega_{1}-2\omega_{2}\right)}\right\}.$$
Then, the Gauss-Newton's like method for solving \eqref{eq:p1}, with initial point $x_0\in
B(x_*, r)\backslash \{x_*\}$
\begin{equation} \label{eq:DNSqn2}
x_{k+1}={x_k}+S_k, \qquad  B(x_k)S_k=-F'(x_k)^*F(x_k), \qquad
\; k=0,1,\ldots,
\end{equation}
where $B(x_k)$ is an invertible approximation of $F'(x_k)^*F'(x_k)$ satisfying
$$
\|B(x_k)^{-1}F'(x_k)^*F'(x_k)\| \leq \omega_{1}, \qquad
\|B(x_k)^{-1}F'(x_k)^*F'(x_k)-I\| \leq \omega_{2}, \qquad
\; k=0,1,\ldots,
$$
is well defined, contained in $B(x_*,r)$, converges to $x_*$ and there holds
$$   \|x_{k+1}-x_*\| \leq
    \frac{\beta\omega_{1} K}{2(1-\beta K\|x_0-x_*\|)}\|x_k-x_*\|^2+\left(\frac{\omega_{1}\sqrt{2}c \beta^2K}{1-\beta K\|x_0-x_*\|}+\omega_2\right)\|x_k-x_*\|,
$$
for all $ k=0,1,\ldots.$
\end{corollary}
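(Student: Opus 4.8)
The plan is to recognize that Corollary~\ref{cor:li1} is precisely the specialization of Theorem~\ref{th:ntqnnm} to the exact case $\vartheta=0$, so the whole argument reduces to checking that the substitution $\vartheta=0$ turns the hypotheses, the radius, and the error estimate of Theorem~\ref{th:ntqnnm} into the verbatim statement of the corollary. First I would confirm that the parameter constraint $\omega_{1}(\alpha+\alpha\vartheta+\vartheta)+\omega_{2}<1$ collapses to $\omega_{1}\alpha+\omega_{2}<1$, which is exactly the hypothesis of the corollary, while the remaining data ($c$, $\beta$, $\kappa$, the Lipschitz bound $\|F'(x)-F'(y)\|\le K\|x-y\|$, the injectivity of $F'(x_*)$, and the bounds $\|B(x_k)^{-1}F'(x_k)^*F'(x_k)\|\le\omega_{1}$ and $\|B(x_k)^{-1}F'(x_k)^*F'(x_k)-I\|\le\omega_{2}$) coincide in the two statements and need no adjustment.

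The one step deserving explicit attention is the disappearance of the residual and of the preconditioners. When $\vartheta=0$, the forcing condition $0\le\theta_{k}\,\mbox{cond}(P_{k}F'(x_{k})^*F'(x_{k}))\le\vartheta$ forces $\theta_{k}=0$ for every $k$, since the condition number is strictly positive; the residual bound $\|P_{k}r_{k}\|\le\theta_{k}\|P_{k}F'(x_{k})^*F(x_{k})\|$ then gives $P_{k}r_{k}=0$, and invertibility of $P_{k}$ yields $r_{k}=0$. Consequently the inexact iteration \eqref{eq:DNSqn223} reduces exactly to the residual-free Gauss-Newton's like iteration \eqref{eq:DNSqn2}, and the preconditioners drop out of the conclusion entirely.

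Finally I would substitute $\vartheta=0$ into the two explicit formulas of Theorem~\ref{th:ntqnnm}. The radius becomes
$$r=\min\left\{\kappa,\frac{2(1-\omega_{2})-2\sqrt{2}cK\beta^{2}\omega_{1}}{\beta K\left(2+\omega_{1}-2\omega_{2}\right)}\right\},$$
matching the corollary, and setting $(1+\vartheta)=1$ together with the vanishing term $\omega_{1}\vartheta$ transforms the error estimate into the claimed quadratic-plus-linear inequality. Since $F$, $x_*$, this radius $r$, and the iteration with $r_{k}\equiv0$ satisfy every hypothesis of Theorem~\ref{th:ntqnnm} with $\vartheta=0$, taking $x_{0}\in B(x_*,r)\backslash\{x_*\}$ lets the four conclusions---well-definedness, containment in $B(x_*,r)$, convergence to $x_*$, and the stated rate---transfer directly. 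There is no genuine obstacle in this argument; the only point requiring a moment's care is the implication $\vartheta=0\Rightarrow r_{k}\equiv0$, which rests on the strict positivity of the condition number, and this is the step I would flag in the write-up.
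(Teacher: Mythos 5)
Your proposal is correct and follows exactly the route the paper intends: the corollary is obtained by setting $\vartheta=0$ in Theorem~\ref{th:ntqnnm}, noting that the condition number being at least one forces $\theta_k\equiv 0$ and hence $r_k\equiv 0$, and then reading off the specialized radius and error estimate. The paper states the corollary without writing out this specialization, so your explicit verification (in particular of the step $\vartheta=0\Rightarrow r_k\equiv 0$) is a faithful, slightly more detailed version of the same argument.
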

Note that letting $c=0$ in the above corollary, we obtain the Corollary~6.1 of \cite{chen2}.
%%%%%%%%%%%%%%%%%%%%%%%%%%%%%%%%%%%%%%%%%%%%%%%%%%%%%%%%%%%%%
\subsection{Convergence result under Smale's condition }
%%%%%%%%%%%%%%%%%%%%%%%%%%%%%%%%%%%%%%%%%%%%%%%%%%%%%%%%%%%%%
In this section we present a correspondent theorem to Theorem
\ref{th:nt} under Smale's condition. For more details
   see Smale \cite{S86} and Dedieu and Shub  \cite{MR1651750}.

\begin{theorem}\label{theo:Smale}
Let $\Omega\subseteq \banacha$ be an open set,
$F:{\Omega}\to \banachb$ a continuously differentiable
function. Let $x_* \in \Omega,$ $R>0$ and
$$
c:=\|F(x_*)\|, \qquad \beta:=\left\|F'(x_*)^{\dagger}\right\|, \qquad \kappa:=\sup \left\{ t\in [0, R): B(x_*, t)\subset\Omega \right\}.
$$
Suppose that $F'(x_*)^*F(x_*)=0$, $F '(x_*)$ is injective
and
\begin{equation} \label{eq:SmaleCond}
 \qquad \gamma := \sup _{ n > 1 }\left\| \frac
{F^{(n)}(x_*)}{n !}\right\|^{1/(n-1)}<+\infty,\qquad
\qquad \alpha:=2\sqrt{2}c\beta^2\gamma< 1.
\end{equation}
Take $0\leq \vartheta<1$, $0\leq \omega_{2}<\omega_{1}$  such that  $\omega_{1}(\alpha+\alpha\vartheta+\vartheta)+\omega_{2}<1 $. Let
$a:=(1-\vartheta\omega_{1}-\omega_{2})$,
$b:=(1+\vartheta)\omega_{1}\beta,$ $\bar{a}:=b+2a(1+\beta)-\sqrt{2}\gamma\beta bc$ and
$$
r:=\min \left\{\kappa, \frac{\bar{a}-\sqrt{\bar{a}^2-4a(1+\beta)(a-2\sqrt{2}c\beta b \gamma)}}{2a\gamma(1+\beta)}\right\}.
$$
Then, the inexact Gauss-Newton like methods for solving \eqref{eq:p1}, with initial point $x_0\in
B(x_*, r)\backslash \{x_*\}$
\begin{equation} \label{eq:DNSqn22}
x_{k+1}={x_k}+S_k, \qquad  B(x_k)S_k=-F'(x_k)^*F(x_k)+r_{k}, \qquad
\; k=0,1,\ldots,
\end{equation}
with the following conditions for  the residual $r_k,$ and the forcing term $\theta_k$
$$
\|P_{k}r_{k}\|\leq \theta_{k}\|P_{k}F'(x_{k})^*F(x_{k})\|,
\qquad  0\leq \theta_{k}\mbox{cond}(P_{k}F'(x_{k})^*F'(x_{k}))\leq
\vartheta,\qquad
\; k=0,1,\ldots,$$
where  $\{P_{k}\}$ is an invertible matrix sequence (preconditoners for the linear system in \eqref{eq:DNSqn22}) and
$B(x_k)$ is an invertible approximation of $F'(x_k)^*F'(x_k)$ satisfying
$$
\|B(x_k)^{-1}F'(x_k)^*F'(x_k)\| \leq \omega_{1}, \qquad
\|B(x_k)^{-1}F'(x_k)^*F'(x_k)-I\| \leq \omega_{2}, \qquad
\; k=0,1,\ldots,
$$
is well defined, contained in $B(x_*,r)$, converges to $x_*$ and there holds
\begin{align*}
    \|x_{k+1}-x_*\| \leq &
    \frac{(1+\vartheta)\omega_{1}\beta\gamma}{(1-\gamma \|x_0-x_*\|)^2-\beta\gamma(2\|x_0-x_*\|-\gamma\|x_0-x_*\|^2)}\|x_k-x_*\|^2\\+
    &\left(\frac{(1+\vartheta)\omega_{1}\sqrt{2}c \beta^2\gamma(2-\gamma\|x_0-x_*\|)}{(1-\gamma \|x_0-x_*\|)^2-\beta\gamma(2\|x_0-x_*\|-\gamma\|x_0-x_*\|^2)}+\omega_1\vartheta+\omega_2\right)\|x_k-x_*\|,
\end{align*}
for all $ k=0,1,\ldots.$
 \end{theorem}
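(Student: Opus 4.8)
The plan is to reduce Theorem~\ref{theo:Smale} to Theorem~\ref{th:nt} by exhibiting the majorant function attached to Smale's condition, following exactly the template used for the Lipschitz case in Theorem~\ref{th:ntqnnm}. The natural candidate is
$$
f(t)=\frac{t}{1-\gamma t}-2t,\qquad t\in\left[0,\tfrac1\gamma\right),
$$
so that one takes $R=1/\gamma$. First I would dispatch the structural hypotheses by direct differentiation: $f(0)=0$ and $f'(t)=1/(1-\gamma t)^2-2$ give $f'(0)=-1$, which is {\bf h1}; since $f''(t)=2\gamma/(1-\gamma t)^3>0$, the derivative $f'$ is convex and strictly increasing, which is {\bf h2}; and $D^+f'(0)=f''(0)=2\gamma$, so that $\alpha=\sqrt2\,c\,\beta^2D^+f'(0)=2\sqrt2\,c\,\beta^2\gamma$ coincides with the quantity in \eqref{eq:SmaleCond}, and {\bf h3} holds by the standing assumption $\alpha<1$.

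The crucial step is to show that Smale's condition \eqref{eq:SmaleCond} forces the majorant inequality \eqref{Hyp:MH} for this $f$. For $\|x-x_*\|<1/\gamma$ I would bound the second derivative by expanding $F''$ in its Taylor series at $x_*$ and using the definition of $\gamma$ (namely $\|F^{(n)}(x_*)\|\le n!\,\gamma^{n-1}$), obtaining
$$
\|F''(x)\|\le\sum_{n\ge2}\frac{\|F^{(n)}(x_*)\|}{(n-2)!}\|x-x_*\|^{n-2}\le\sum_{n\ge2}n(n-1)\gamma^{n-1}\|x-x_*\|^{n-2}=\frac{2\gamma}{(1-\gamma\|x-x_*\|)^3}=f''(\|x-x_*\|),
$$
where the series is summed by Proposition~\ref{le:ess} with argument $\gamma\|x-x_*\|$. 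Writing $F'(x)-F'(x_*+\tau(x-x_*))=\int_\tau^1F''(x_*+s(x-x_*))(x-x_*)\,ds$ and passing to norms then yields $\|F'(x)-F'(x_*+\tau(x-x_*))\|\le\int_\tau^1 f''(s\|x-x_*\|)\|x-x_*\|\,ds=f'(\|x-x_*\|)-f'(\tau\|x-x_*\|)$, which is precisely \eqref{Hyp:MH}.

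It then remains to identify the abstract radius of Theorem~\ref{th:nt} with the explicit one in the statement. Using $f'(t)+1=\gamma t(2-\gamma t)/(1-\gamma t)^2$ and $tf'(t)-f(t)=\gamma t^2/(1-\gamma t)^2$, I would show $1-\beta(f'(t)+1)=D(t)/(1-\gamma t)^2$ with $D(t):=(1+\beta)\gamma^2t^2-2(1+\beta)\gamma t+1$, so that $\beta[f'(t)+1]<1$ is equivalent to $D(t)>0$ and $\nu$ is the smaller root of $D$. Clearing the denominator $D(t)>0$ turns the defining inequality for $\rho$ into the quadratic inequality
$$
a(1+\beta)\gamma^2t^2-\gamma\bar a\,t+\bigl(a-2\sqrt2\,c\beta b\gamma\bigr)>0,\qquad a=1-\vartheta\omega_1-\omega_2,\quad b=(1+\vartheta)\omega_1\beta,
$$
whose constant term equals $1-\omega_2-\omega_1(\alpha+\alpha\vartheta+\vartheta)>0$ by hypothesis. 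As the leading coefficient is positive and the left side blows up as $t\to\nu^-$, the inequality holds precisely up to the smaller root, which is exactly the displayed expression for $r$ (so the discriminant is automatically nonnegative). Finally, substituting these closed forms into the rate \eqref{eq:q2} collapses the $f$-dependent coefficients to $\gamma/D(\|x_0-x_*\|)$ and $\gamma(2-\gamma\|x_0-x_*\|)/D(\|x_0-x_*\|)$, reproducing the stated inequality. The main obstacle is the bound on $\|F''\|$ and its integration to \eqref{Hyp:MH}; once that is in place, the identification of $\rho$ is careful but routine algebra.
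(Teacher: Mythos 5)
Your proposal is correct and follows essentially the same route as the paper: it reduces the theorem to Theorem~\ref{th:nt} via the majorant function $f(t)=t/(1-\gamma t)-2t$ on $[0,1/\gamma)$, verifies {\bf h1}--{\bf h3}, and identifies $\nu$, $\rho$ and the rate coefficients by the same algebra. The only difference is presentational: where the paper obtains \eqref{Hyp:MH} by citing Lemmas~\ref{lemma:qc1} and~\ref{lc} (imported from \cite{MAX2}), you prove those two facts inline --- the Taylor expansion of $F''$ at $x_*$ summed via Proposition~\ref{le:ess}, followed by integration along the segment --- which is exactly the content of the cited lemmas.
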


We need the following result to prove the above theorem.
\begin{lemma} \label{lemma:qc1}
Let $\Omega\subseteq \banacha$ be an open set,
$F:{\Omega}\to \banachb$ an analytic function.
 Suppose that
$x_*\in \Omega$  and  $B(x_{*},
1/\gamma) \subset \Omega$, where $\gamma$ is defined in
\eqref{eq:SmaleCond}. Then, for all $x\in B(x_{*}, 1/\gamma)$
there holds
$$
\|F''(x)\| \leqslant  2\gamma/( 1- \gamma \|x-x_*\|)^3.
$$
\end{lemma}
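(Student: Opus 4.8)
The plan is to exploit analyticity of $F$ to expand $F''$ as a convergent power series centered at $x_*$, bound its coefficients via Smale's condition \eqref{eq:SmaleCond}, and then sum the resulting numerical series using Proposition~\ref{le:ess}. First I would fix $x \in B(x_*, 1/\gamma)$ and write $h := x - x_*$, so that $\|h\| = \|x-x_*\| < 1/\gamma$. Since $F$ is analytic on $\Omega \supseteq B(x_*, 1/\gamma)$, it admits the Taylor expansion $F(x_*+h) = \sum_{n=0}^\infty \frac{1}{n!}F^{(n)}(x_*)\,h^n$, where $F^{(n)}(x_*)$ is the symmetric $n$-linear operator and $h^n$ denotes the diagonal argument $(h,\dots,h)$. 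Differentiating this series twice with respect to $h$ and using the symmetry of $F^{(n)}(x_*)$, each term $\frac{1}{n!}F^{(n)}(x_*)h^n$ contributes $\frac{n(n-1)}{n!}F^{(n)}(x_*)(h,\dots,h,\cdot,\cdot)$ to $F''(x_*+h)$, whence
$$
\|F''(x_*+h)\| \leq \sum_{n=2}^{\infty}\frac{n(n-1)}{n!}\,\|F^{(n)}(x_*)\|\,\|h\|^{n-2}.
$$

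Next I would invoke Smale's condition: the definition of $\gamma$ in \eqref{eq:SmaleCond} gives $\|F^{(n)}(x_*)/n!\|^{1/(n-1)} \leq \gamma$ for every $n>1$, that is, $\|F^{(n)}(x_*)\| \leq n!\,\gamma^{n-1}$ for all $n\geq 2$. Substituting this bound into the inequality above cancels the factorials and yields
$$
\|F''(x_*+h)\| \leq \sum_{n=2}^{\infty} n(n-1)\,\gamma^{n-1}\|h\|^{n-2}
= \gamma \sum_{i=0}^{\infty}(i+2)(i+1)\,\bigl(\gamma\|h\|\bigr)^{i},
$$
after the reindexing $i=n-2$. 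Setting $t := \gamma\|h\| = \gamma\|x-x_*\| \in [0,1)$, Proposition~\ref{le:ess} evaluates the inner series as $\sum_{i=0}^\infty (i+2)(i+1)t^i = 2/(1-t)^3$, and therefore
$$
\|F''(x)\| \leq \frac{2\gamma}{(1-\gamma\|x-x_*\|)^3},
$$
which is exactly the claimed bound.

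The only genuinely delicate point, which I would state carefully rather than belabor, is the justification of termwise second differentiation and the passage to operator norms for the multilinear derivatives. This is licensed because the condition $\gamma\|x-x_*\|<1$ forces the majorizing series $\sum n(n-1)\gamma^{n-1}\|h\|^{n-2}$ to converge, so the differentiated series converges uniformly on a neighbourhood of $x$ and its sum is genuinely $F''$; the estimate $\|F^{(n)}(x_*)(h,\dots,h,\cdot,\cdot)\| \leq \|F^{(n)}(x_*)\|\,\|h\|^{n-2}$ is then just submultiplicativity of the $n$-linear operator norm. Everything else is the routine bookkeeping of reindexing and applying Proposition~\ref{le:ess}, so the heart of the argument is the clean interaction between the Smale coefficient bound and the closed-form sum of $\sum (i+2)(i+1)t^i$.
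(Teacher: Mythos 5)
Your proof is correct and is essentially the argument the paper intends: the paper defers to Lemma~21 of \cite{MAX2}, which is exactly this standard Smale-type estimate, and the inclusion of Proposition~\ref{le:ess} in the preliminaries (the closed form $\sum_{i\ge 0}(i+2)(i+1)t^i=2/(1-t)^3$) confirms that the termwise expansion of $F''$, the coefficient bound $\|F^{(n)}(x_*)\|\le n!\,\gamma^{n-1}$ from \eqref{eq:SmaleCond}, and the reindexed summation are precisely the intended steps. Your remark on justifying termwise differentiation via the convergent majorizing series is the right way to handle the only delicate point.
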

\begin{proof}
See the proof  of  the Lemma~21 of \cite{MAX2}.
\end{proof}
The next result gives a condition that is easier to check than
condition \eqref{Hyp:MH}, whenever the functions under
consideration are twice continuously differentiable.
%%%%%%%%%%%%%%%%%%55
\begin{lemma} \label{lc}
Let $\Omega\subseteq \banacha$ be an open set, $x_*\in \Omega$  and
$F:{\Omega}\to \banachb$  be twice continuously on $\Omega$. If there exists a \mbox{$f:[0,R)\to \mathbb {R}$} twice continuously differentiable such that
 \begin{equation} \label{eq:lc2}
\|F''(x)\|\leqslant f''(\|x-x_*\|),
\end{equation}
for all $x\in  \Omega$ such that  $\|x-x_*\|<R$. Then $F$ and $f$
satisfy \eqref{Hyp:MH}.
\end{lemma}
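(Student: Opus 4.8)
The goal is to show that the condition $\|F''(x)\|\leq f''(\|x-x_*\|)$ implies the majorant inequality \eqref{Hyp:MH}. The plan is to express the left-hand side of \eqref{Hyp:MH} as an integral of $F''$ along the segment joining $x_*+\tau(x-x_*)$ to $x$, then bound this integral using \eqref{eq:lc2}, and finally recognize the resulting scalar integral as the right-hand side of \eqref{Hyp:MH}.

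First I would fix $x\in\Omega$ with $\|x-x_*\|<R$ and $\tau\in[0,1]$, and write the difference $F'(x)-F'(x_*+\tau(x-x_*))$ as the integral of the second derivative. Concretely, parametrizing the segment by $s\mapsto x_*+s(x-x_*)$ for $s\in[\tau,1]$, the fundamental theorem of calculus for the (Fréchet) derivative gives
\begin{equation*}
F'(x)-F'\bigl(x_*+\tau(x-x_*)\bigr)=\int_\tau^1 F''\bigl(x_*+s(x-x_*)\bigr)(x-x_*)\,ds,
\end{equation*}
so that taking norms and using $\|F''(x_*+s(x-x_*))(x-x_*)\|\leq\|F''(x_*+s(x-x_*))\|\,\|x-x_*\|$ yields
\begin{equation*}
\left\|F'(x)-F'\bigl(x_*+\tau(x-x_*)\bigr)\right\|\leq\int_\tau^1\bigl\|F''\bigl(x_*+s(x-x_*)\bigr)\bigr\|\,\|x-x_*\|\,ds.
\end{equation*}
Here I must check that the whole segment lies in the region where \eqref{eq:lc2} applies, which holds because $\|x_*+s(x-x_*)-x_*\|=s\|x-x_*\|\leq\|x-x_*\|<R$ for $s\in[\tau,1]\subseteq[0,1]$; this is exactly the convexity observation used in Lemma~\ref{pr:taylor}.

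Next I would apply the hypothesis \eqref{eq:lc2} to the integrand, giving $\|F''(x_*+s(x-x_*))\|\leq f''(s\|x-x_*\|)$, and substitute into the bound above to obtain
\begin{equation*}
\left\|F'(x)-F'\bigl(x_*+\tau(x-x_*)\bigr)\right\|\leq\int_\tau^1 f''\bigl(s\|x-x_*\|\bigr)\,\|x-x_*\|\,ds.
\end{equation*}
The final step is the scalar change of variables $u=s\|x-x_*\|$, under which the right-hand side becomes $\int_{\tau\|x-x_*\|}^{\|x-x_*\|}f''(u)\,du=f'(\|x-x_*\|)-f'(\tau\|x-x_*\|)$, which is precisely the right-hand side of \eqref{Hyp:MH}.

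I do not expect any genuine obstacle here, since the argument is a direct application of the mean-value/integral representation; the only points requiring a little care are verifying that the segment stays inside the ball of radius $R$ so that \eqref{eq:lc2} is legitimately applied along the entire path of integration, and justifying the vector-valued fundamental theorem of calculus, which is standard for continuously differentiable maps between Hilbert spaces. The convexity of the ball (as already invoked in Lemma~\ref{pr:taylor}) handles the former cleanly, so the proof reduces to the routine chain of inequalities above.
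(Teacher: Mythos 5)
Your proof is correct: the representation of $F'(x)-F'(x_*+\tau(x-x_*))$ as $\int_\tau^1 F''(x_*+s(x-x_*))(x-x_*)\,ds$, the bound via \eqref{eq:lc2}, and the change of variables $u=s\|x-x_*\|$ give exactly the right-hand side of \eqref{Hyp:MH}, and your check that the segment stays in the ball is the right point of care. The paper itself only cites Lemma~22 of \cite{MAX2} here, and that lemma is proved by precisely this argument, so your route is essentially the same as the paper's.
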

\begin{proof}
See the proof  of  the Lemma~22 of \cite{MAX2}.
\end{proof}
%%%%%%%%%%%%%%%%%%%%%%%%%%%%%
{\bf [Proof of Theorem \ref{theo:Smale}]}.  Consider the real
function $f:[0,1/\gamma) \to \mathbb{R}$ defined by
$$
f(t)=\frac{t}{1-\gamma t}-2t.
$$
It is straightforward to show that $f$ is  analytic and that
$$
f(0)=0, \quad f'(t)=1/(1-\gamma t)^2-2, \quad f'(0)=-1, \quad
f''(t)=(2\gamma)/(1-\gamma t)^3, \quad f^{n}(0)=n!\,\gamma^{n-1},
$$
for $n\geq 2$. It follows from the last equalities
that $f$ satisfies {\bf h1} and  {\bf h2}. Since $ 2\sqrt{2}c\beta^2\gamma< 1$ the condition  {\bf h3} also holds. Now, as
$f''(t)=(2\gamma)/(1-\gamma t)^3$ combining Lemmas ~\ref{lc},
\ref{lemma:qc1}  we conclude that $F$  and $f$ satisfy
\eqref{Hyp:MH} with $R=1/\gamma$. In this case, it is easy to see
that constants $\nu$ and $\rho$ as defined in Theorem \ref{th:nt}, satisfy
$$
0<\rho=\frac{\bar{a}-\sqrt{\bar{a}^2-4a(1+\beta)(a-2\sqrt{2}c\beta b \gamma)}}{2a\gamma(1+\beta)}<\nu=((1+\beta)-\sqrt{\beta(1+\beta)})/(\gamma(1+\beta))<1/\gamma,
$$
and as a consequence,
$
0<r=\min \{\kappa,\rho\}.
$
Therefore, as  $F$,
$\sigma$, $f$ and $x_*$ satisfy all hypotheses of Theorem
\ref{th:nt}, taking $x_0\in B(x_*, r)\backslash \{x_*\}$, the
statements of the theorem follow from Theorem \ref{th:nt}.\qed
\\

For the case $\vartheta=0$,  the Theorem \ref{theo:Smale} becomes:
%%%%%%%%%%%%%%%%%%%%%%%%%%%%%%%%%%
\begin{corollary}\label{cor:tt}
Let $\Omega\subseteq \banacha$ be an open set,
$F:{\Omega}\to \banachb$ a continuously differentiable
function. Let $x_* \in \Omega,$ $R>0$ and
$$
c:=\|F(x_*)\|, \qquad \beta:=\left\|F'(x_*)^{\dagger}\right\|, \qquad \kappa:=\sup \left\{ t\in [0, R): B(x_*, t)\subset\Omega \right\}.
$$
Suppose that $F'(x_*)^*F(x_*)=0$, $F '(x_*)$ is injective
and
$$
 \qquad \gamma := \sup _{ n > 1 }\left\| \frac
{F^{(n)}(x_*)}{n !}\right\|^{1/(n-1)}<+\infty,\qquad
\qquad \alpha:=2\sqrt{2}c\beta^2\gamma< 1.
$$
Take  $0\leq \omega_{2}<\omega_{1}$  such that  $\omega_{1}\alpha+\omega_{2}<1 $. Let
$\bar{a}:=\omega_{1}\beta+2(1-\omega_{2})(1+\beta)-\sqrt{2}\gamma\beta^2\omega_{1}c$ and
$$
r:=\min \left\{\kappa, \frac{\bar{a}-\sqrt{\bar{a}^2-4(1-\omega_{2})(1+\beta)(1-\omega_{2}-2\sqrt{2}c\beta^2\omega_{1} \gamma)}}{2(1-\omega_{2})\gamma(1+\beta)}\right\}.
$$
Then, the Gauss-Newton's like method for solving \eqref{eq:p1}, with initial point $x_0\in
B(x_*, r)\backslash \{x_*\}$
$$
x_{k+1}={x_k}+S_k, \qquad  B(x_k)S_k=-F'(x_k)^*F(x_k), \qquad
\; k=0,1,\ldots,
$$
where $B(x_k)$ is an invertible approximation of $F'(x_k)^*F'(x_k)$ satisfying
$$
\|B(x_k)^{-1}F'(x_k)^*F'(x_k)\| \leq \omega_{1}, \qquad
\|B(x_k)^{-1}F'(x_k)^*F'(x_k)-I\| \leq \omega_{2}, \qquad
\; k=0,1,\ldots,
$$
is well defined, contained in $B(x_*,r)$, converges to $x_*$ and there holds
\begin{align*}
    \|x_{k+1}-x_*\| \leq &
    \frac{\omega_{1}\beta\gamma}{(1-\gamma \|x_0-x_*\|)^2-\beta\gamma(2\|x_0-x_*\|-\gamma\|x_0-x_*\|^2)}\|x_k-x_*\|^2\\+
    &\left(\frac{\omega_{1}\sqrt{2}c \beta^2\gamma(2-\gamma\|x_0-x_*\|)}{(1-\gamma \|x_0-x_*\|)^2-\beta\gamma(2\|x_0-x_*\|-\gamma\|x_0-x_*\|^2)}+\omega_2\right)\|x_k-x_*\|,
\end{align*}
for all $ k=0,1,\ldots.$
 \end{corollary}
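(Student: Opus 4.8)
The plan is to obtain Corollary~\ref{cor:tt} as the specialization $\vartheta=0$ of Theorem~\ref{theo:Smale}, which is the cleanest route; for completeness I would also indicate how the same statement falls directly out of Corollary~\ref{col:pc1} with the Smale majorant function, since that reveals where the closed-form radius comes from. First I would set $\vartheta=0$ throughout the hypotheses of Theorem~\ref{theo:Smale}: the admissibility condition $\omega_{1}(\alpha+\alpha\vartheta+\vartheta)+\omega_{2}<1$ collapses to $\omega_{1}\alpha+\omega_{2}<1$, and the auxiliary constants become $a=1-\omega_{2}$ and $b=\omega_{1}\beta$. Substituting these into $\bar a=b+2a(1+\beta)-\sqrt{2}\gamma\beta bc$ gives $\bar a=\omega_{1}\beta+2(1-\omega_{2})(1+\beta)-\sqrt{2}\gamma\beta^{2}\omega_{1}c$, and substituting them into the radius of Theorem~\ref{theo:Smale} — in particular replacing the discriminant term $4a(1+\beta)(a-2\sqrt{2}c\beta b\gamma)$ by $4(1-\omega_{2})(1+\beta)(1-\omega_{2}-2\sqrt{2}c\beta^{2}\omega_{1}\gamma)$ and the outer factor $2a\gamma(1+\beta)$ by $2(1-\omega_{2})\gamma(1+\beta)$ — reproduces verbatim the expression for $r$ in the statement. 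The convergence estimate reduces termwise once the factor $(1+\vartheta)$ becomes $1$ and the term $\omega_{1}\vartheta$ vanishes, so that the conclusion of Theorem~\ref{theo:Smale} yields precisely the corollary.

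For a self-contained derivation I would instead invoke Corollary~\ref{col:pc1} directly with the same majorant function $f:[0,1/\gamma)\to\mathbb{R}$, $f(t)=t/(1-\gamma t)-2t$. As in the proof of Theorem~\ref{theo:Smale}, $f$ is analytic with $f(0)=0$, $f'(0)=-1$ and $f''(t)=2\gamma/(1-\gamma t)^{3}>0$, so {\bf h1} and {\bf h2} hold, while $D^{+}f'(0)=f''(0)=2\gamma$ turns {\bf h3} into $\alpha=2\sqrt{2}c\beta^{2}\gamma<1$, which is the standing hypothesis; Lemmas~\ref{lc} and~\ref{lemma:qc1} then guarantee that $F$ and $f$ satisfy~\eqref{Hyp:MH} with $R=1/\gamma$. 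The only genuine computation is to put the two basic quantities entering Corollary~\ref{col:pc1} into closed form,
$$
tf'(t)-f(t)=\frac{\gamma t^{2}}{(1-\gamma t)^{2}}, \qquad f'(t)+1=\frac{\gamma t(2-\gamma t)}{(1-\gamma t)^{2}},
$$
after which the quantity $1-\beta(f'(t)+1)$ appearing throughout Corollary~\ref{col:pc1} becomes $[(1-\gamma t)^{2}-\beta\gamma t(2-\gamma t)]/(1-\gamma t)^{2}$.

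Feeding these into the defining inequality for $\rho$ in Corollary~\ref{col:pc1} (with $\vartheta=0$) and clearing the common factor $(1-\gamma t)^{-2}$ reduces that inequality to
$$
\omega_{1}\beta\,\frac{\gamma\,[\,t+\sqrt{2}c\beta(2-\gamma t)\,]}{(1-\gamma t)^{2}-\beta\gamma t(2-\gamma t)}+\omega_{2}<1,
$$
which, upon cross-multiplying, is a quadratic inequality in $t$; its smaller positive root is exactly the second entry of the $\min$ defining $r$, and eliminating $u=\gamma t$ from $(1-\gamma t)^{2}=\beta\gamma t(2-\gamma t)$ shows $\rho<\nu=((1+\beta)-\sqrt{\beta(1+\beta)})/(\gamma(1+\beta))<1/\gamma$, so that $0<r=\min\{\kappa,\rho\}$. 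Finally, substituting the same closed forms at $t=\|x_{0}-x_{*}\|$ into estimate~\eqref{eq:q2} of Corollary~\ref{col:pc1} produces the two displayed coefficients of $\|x_{k}-x_{*}\|^{2}$ and $\|x_{k}-x_{*}\|$. I expect the main obstacle to be purely this algebra — extracting the closed-form smaller root of the quadratic in $t$ and verifying the ordering $0<\rho<\nu$ — which is identical to the corresponding step in the proof of Theorem~\ref{theo:Smale} specialized to $\vartheta=0$, hence routine.
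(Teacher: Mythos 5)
Your proposal is correct and follows the paper's own route: the paper presents Corollary~\ref{cor:tt} simply as the $\vartheta=0$ instance of Theorem~\ref{theo:Smale}, and your substitution $a=1-\omega_2$, $b=\omega_1\beta$ into $\bar a$, the radius, and the convergence estimate checks out exactly. Your supplementary self-contained derivation via Corollary~\ref{col:pc1} with $f(t)=t/(1-\gamma t)-2t$ is likewise just the paper's proof of Theorem~\ref{theo:Smale} specialized to $\vartheta=0$, with the closed forms $tf'(t)-f(t)=\gamma t^2/(1-\gamma t)^2$ and $f'(t)+1=\gamma t(2-\gamma t)/(1-\gamma t)^2$ correctly computed.
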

Note that letting $c=0$ in the above corollary, we obtain the Example~1 of \cite{chen2}.
\section{Final remark}\label{rf}
The Theorem \ref{th:nt} gives an estimate of the convergence radius for inexact Gauss-Newton like methods. In particular, for $\vartheta=\omega_1=0$ and $\omega_2=1$  is shown in Ferreira et al. \cite{MAX2}, that $r$ is the best possible convergence radius.

Another detail is that, as pointed out by  Morini in  \cite{B10} if preconditioning  $P_{k}$,  satisfying
\begin{equation} \label{eq:r1}
\|P_{k}r_{k}\|\leq \theta_{k}\|P_{k}F'(x_{k})^*F(x_k)\|,
\end{equation}
for some forcing sequence $\{\theta_{k}\}$, is applied to finding the inexact Gauss-Newton steep, then the inverse proportionality between each forcing term $\theta_{k}$ and $\mbox{cond}(P_{k}F'(x_{k})^*F(x_k))$ stated in the following assumption:
\begin{equation} \label{eq:r2}
0< \theta_{k}\mbox{cond}(P_{k}F'(x_{k})^*F(x_k))\leq
\vartheta,\qquad
\; k=0,1,\ldots,
\end{equation}
is sufficient to guarantee convergence, and may be overly restrictive to bound the sequence $\{\theta_{k}\}$, always such that the matrices $P_{k}F'(x_{k})^*F(x_k)$, for $k=0,1,\ldots,$ are badly conditioned. Moreover,   $\theta_{k}$ does not depend on $\mbox{cond} (F'(x_{k})^*F(x_k))$ but only on the $\mbox{cond}(P_{k}F'(x_{k})^*F(x_k))$ and a suitable choice of scaling matrix $P_k$ leads to a relaxation of the forcing terms.

%%%%%%%%%%%%%%%%%%%%%%%%%%%%%%%%%%%%%%%%%%%%%%%%%%%%%%%%%%%%%%%%%%%%%%%%%%%%%%%%
%\begin{thebibliography}{99}
%\bibliographystyle{alpha}
%\bibliographystyle{plain}
%\bibliographystyle{siam}
%\bibliographystyle{unsrt}
%\bibliographystyle{abbrv}inexact gauss-newton
%\bibliography{c:/orizon/Documentos/papers/bibtex/newton}
%\bibliography{RefGaussNewton}
%\end{document}

\def\cprime{$'$}

\end{document}